\def\ps@pprintTitle{%
  \let\@oddhead\@empty
  \let\@evenhead\@empty
  \def\@oddfoot{\reset@font\hfil\thepage\hfil}
  \let\@evenfoot\@oddfoot
}
\numberwithin{equation}{section}
\newtheorem{theorem}{Theorem}[section]
\newtheorem{prop}[theorem]{Proposition}
\newtheorem{lemma}[theorem]{Lemma}
\newtheorem*{KL}{Theorem 2.1$^*$}
\newtheorem{corollary}[theorem]{Corollary}
\theoremstyle{definition}
\newtheorem{definition}[theorem]{Definition}
\theoremstyle{remark}
\newtheorem{remark}[theorem]{Remark}
\newcommand\junk[1]{}
\def\C{{\mathfrak{C}}}
\def\A{\mathcal{A}}
\def\D{{\mathcal D}}
\def\E{\mathcal{E}}
\def\Ha{\mathcal{H}}
\def\P{\mathcal{P}}
\def\Q{\mathcal{Q}}
\def\L{\mathcal{L}}
\def\R{{\mathbb R}}
\def\U{\mathcal{U}}
\def\qed{\hfill$\Box$}
\def\Delta{\Gamma^-}
\def\nabla{\Gamma^+}
\begin{document}
\begin{frontmatter}
\title{AZ-identities and Strict 2-part Sperner Properties of Product  Posets\tnoteref{gen}}
\author[bie]{Harout Aydinian\fnref{ayd}}
\author[renyi]{P\'eter L. Erd\H os\fnref{elp}}
\address[bie]{Department of Mathematics, University of Bielefeld,  POB 100131,
    D-33501, Bielefeld \\
    {\tt email}: ayd@math.uni-bielefeld.de}
\address[renyi]{Alfr\'ed R{\'e}nyi Institute, Re\'altanoda u 13-15 Budapest, 1053 Hungary\\
        {\tt email}: elp@renyi.hu}
\fntext[ayd]{This author was supported by DFG-project AH46/7-1 ''General Theory of Information Transfer``}
\fntext[elp]{During this research PLE  enjoyed the hospitality of the AG Genominformatik of
        Universit\"at  Bielefeld, Germany  and was supported in part by the Alexander von
        Humboldt Foundation  and by the Hungarian NSF, under contract NK 78439 and K 68262.}
\tnotetext[gen]{Part of these results was published in an extended abstract form in the Proc. of Eurocomb 2011.}
\begin{abstract}
One of central  issues in extremal set theory is Sperner's theorem and its generalizations. Among such generalizations is  the best-known LYM (also known as BLYM) inequality and the Ahlswede--Zhang (AZ) identity which surprisingly generalizes the BLYM into an identity. Sperner's theorem  and the BLYM inequality has been also generalized to a wide class of posets. Another direction in this research was the study of more part Sperner systems.
In this paper we derive AZ type identities for regular posets. We also characterize all maximum 2-part Sperner systems for  a wide class of product  posets.
\end{abstract}
\begin{keyword} Sperner property; strict Sperner property; BLYM inequality; AZ--identity;  2-part Sperner property; normal poset;
regular poset
\end{keyword}
\end{frontmatter}
\section{Introduction}\label{sec:intro}

Extremal set theory  began in 1928 with Sperner's seminal result  in \cite{Sp}.  Since then dozens of generalizations were discovered (a comprehensive survey of these results can be found in  the excellent book \cite{E3}.) The best-known generalization of Sperner's theorem
is the BLYM inequality due to Bollob\'as \cite{Bo}, Lubell \cite{Lu}, Meshalkin \cite{Me}, and Yamamoto \cite{Ya}. An elegant result discovered by  Ahlswede and  Zhang (\cite{AZ1,AZ2}) gives equalities for any  subset system, which, in turn, directly infers not only the corresponding BLYM inequalities, but also the strict versions. The Sperner property and related problems for posets have been intensively studied by many authors. Another direction in this research is the study of more  part Sperner systems, which was motivated by an application of a Sperner type result in 1945 by Erd\H os \cite{erdos}.

In Section~\ref{sec:class} we derive AZ type identities for regular posets. We also discuss strict Sperner properties and strict  BLYM inequalities for a subclass of normal posets. In Section~\ref{sec:twopart} we present  a 2-part AZ--identity  for regular posets. Finally, we characterize all maximum size 2-part Sperner systems for products of two strictly normal posets.

\section{Sperner systems and related problems in posets}\label{sec:class}

Let $(\P,\leq)$ be a poset  with a  {\em rank function} $r$. We denote the set of  elements of rank $i$ by $\P_i$ ; $i=0,1,\dots,r(\P)$ where  $r(\P)$ is the maximum rank in $\P$.  Also $N_i(\P)$  will denote  the $i$th Whitney number of $\P$, that is $N_i(\P)=|\P_i|$, and for short we will use  $N_i$ when this does not cause confusion. In the sequel sometimes we associate $\P$ with its Hasse diagram and use the corresponding terminology. In particular  the Hasse diagram can be represented as a series of bipartite graphs $G_{i,i+1}(\P)= \big (\P_i,\P_{i+1}; E(\prec)\big )$ (for $i=0,\ldots,r(\P)-1$) where $\prec$ denotes the cover relation between the consecutive levels, that is $(a,b)\in E(\prec) \hbox{ iff } a \prec b$ (in words: if $b$ covers $a$).

For the {\em lower} (resp. {\em upper}) {\em degree} of an element $a\in \P$   we use the notation $d^-(a)$ (resp. $d^+(a)$).  For an element $a\in \P$ we define  $\Gamma^+ _i(a)=\{x\in \P_i: x\geq a\}$ and   $\Gamma^-_i(a)=\{x\in \P_i: x\leq a\}$.  Similarly, for a subset $A\subset \P$ we define $\Gamma^+_i(A)=\{x\in \P_i: x\geq a ~\mbox{for some}~ a\in A\}$ and $\Gamma^-_i(A)=\{x\in \P_i: x\leq a ~\mbox{for some}~ a\in A\}$. We also put $\Gamma^+(a)=\Gamma^+_{r(a)+1}(a)$ and  $\Gamma^-(a)= \Gamma^-_{r(a)-1}(a)$. In fact, $d^+(a)=|\Gamma^+(a)|$ and $d^-(a)=|\Gamma^-(a)|$. Given a subset $A\subset \P$ we denote by $\U(A)$  the {\em upset} (also called {\em filter}) generated by $A$, that is  $\U(A)=\{b\in \P: b\geq a,~ a\in A\}.$ Similarly is defined the {\em downset} (also called  {\em ideal}) $\D(A)=\{b\in \P: b\leq a,~ a\in A\}$.

A poset $\P$ is called {\em regular} if for every element $a\in \P$ of a given rank, both  the lower and upper degrees of $a$  depend on its rank $r(a)$, but not  on the actual choice of $a$ within the level. For a regular poset $\P$ we use  $d^-_i$ (resp. $d^+_i$) for the lower degree (resp. upper degree) of an element in $\P_i$.

A  poset $\P$ is called {\em normal}  if it satisfies the {\em normalized matching property}, that is for  every $A\subset \P_i$ we have  $|A|/N_i\leq
\left |\Gamma^-_{i-1}(A)\right |/N_{i-1} $. The definition implies that every normal poset $\P$ is {\em graded}, that is all minimal and maximal elements  have rank 0 and $r(\P)$, respectively. Thus, each maximal chain in $\P$ has length $r(\P)+1$. It is easy to see that each regular poset $\P$ is  normal.

A subset $C\subset \P$  with $|C|=k$ is called  a {\em chain} of length $k$ if its members are  pairwise comparable.  A chain is called {\em maximal} if it is not contained in a larger chain. A subset $A\subset \P$ is called an {\em antichain} or a {\em Sperner system} if its members are pairwise incomparable. A subset $A\subset \P$ is called  a {\em $k$-Sperner system} if it contains no chain of length $k+1$. A subset  $A\subset \P$ is called homogeneous if it consists of the union of complete levels. Clearly if $A$ is a homogeneous system of $k$ levels then it is a $k$-Sperner system. A poset is said to have {\em strong Sperner property} if for every $k$ there exists a maximum size homogeneous $k$-Sperner system.  It is said to have {\em strict $k$-Sperner property} if all maximum $k$-Sperner systems are homogeneous.

\subsection{AZ--identities for regular posets}\label{sec:AZ}
\smallskip\noindent
In this subsection  we prove two Ahlswede--Zhang type equalities for regular posets. At first we need the following notation. Given  $A\subset \P$ and $x\in \P$, we denote $A^x=\{a\in A: a\leq x\}$. We also define $W_A(x)=0$ if $A^x=\emptyset$ and $W_A(x)=\left| \Gamma^-(x) \setminus \Gamma^+_{r(x)-1}(A^x)\right |$ otherwise. Finally we introduce the shorthand notation {\bf U-poset} for posets with universal lower and upper bounds.
\begin{theorem} \label{th:AZ1}
Let $\P$ be a regular U-poset and let $A\subset \P$.  Then we have
\begin{equation}\label{eq:AZ1}
\sum_{x\in \P}\frac{W_A(x)}{d^-_{r(x)}N_{r(x)}}=1.
\end{equation}
By convention, for  $\{x\}=\P_0\subset A$ (thus $W_A(x)=0$ and   $d^-_{r(x)}=d^-_0=0$)~ we put~ $\frac{W_A(x)} {d^-_0\cdot 1}:=1$.
\end{theorem}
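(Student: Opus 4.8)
The plan is to prove \eqref{eq:AZ1} by counting the maximal chains of $\P$ in two ways, in the spirit of the chain-counting proof of the classical Ahlswede--Zhang identity. First I would rewrite $W_A(x)$ using the upset $\U(A)$: if $y\prec x$ and $a\in A$ satisfies $a\le y$, then $a\le x$, so $a\in A^x$, and therefore, for $y\prec x$, one has $y\in\Gamma^+_{r(x)-1}(A^x)$ precisely when $y\in\U(A)$. Consequently, whenever $A^x\neq\emptyset$, equivalently $x\in\U(A)$, $W_A(x)$ counts the elements $y$ covered by $x$ with $y\notin\U(A)$; and $W_A(x)=0$ (hence the corresponding term of \eqref{eq:AZ1} is $0$) whenever $x\notin\U(A)$. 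We may assume $A\neq\emptyset$; and if $\hat0\in A$ then $\U(A)=\P$, so the only nonzero term of \eqref{eq:AZ1} is the one for $x=\hat0$, which equals $1$ by the stated convention. We may therefore assume henceforth that $\hat0\notin A$; note then that every $x\in\U(A)$ has $r(x)\ge1$, so no division by $d^-_{r(x)}$ causes trouble, while the $x=\hat0$ term is $0$.

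Since $\P$ is regular it is normal, hence graded, and together with the U-poset hypothesis this makes $\hat0$ the unique rank-$0$ element and $\hat1$ the unique rank-$r(\P)$ element, with $d^-_k\ge1$ for every $k\ge1$. Next I would record the two consequences of regularity that are needed. Let $m^-_k$ denote the number of maximal chains joining $\hat0$ to a given element of rank $k$; induction on $k$, using that every rank-$k$ element has lower degree $d^-_k$, gives $m^-_k=\prod_{j=1}^{k}d^-_j$, which is independent of the element, and symmetrically the number $m^+_k$ of maximal chains from a given rank-$k$ element up to $\hat1$ is well defined. Counting all maximal chains of $\P$ by the rank-$k$ element they contain shows that the total number $M$ of maximal chains satisfies $M=N_k\,m^-_k\,m^+_k$ for every $k$. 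Since $m^-_k=d^-_k\,m^-_{k-1}$, the number of maximal chains of $\P$ through a prescribed covering edge $y\prec x$ with $r(x)=k$, which is $m^-_{k-1}\,m^+_k$, therefore equals $M/(N_k d^-_k)$, independently of the edge.

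Finally I would double count the pairs $(C,k)$ in which $C=(\hat0=c_0\prec c_1\prec\dots\prec c_{r(\P)}=\hat1)$ is a maximal chain and $k$ is an index with $c_k\in\U(A)$ but $c_{k-1}\notin\U(A)$. Since $\U(A)$ is an upset with $\hat1\in\U(A)$ and $\hat0\notin\U(A)$, each maximal chain contains exactly one such $k$, so the number of pairs equals $M$. Grouping the pairs instead by $x:=c_k$, which ranges over $\U(A)$, the maximal chains with $c_{r(x)}=x$ and $c_{r(x)-1}\notin\U(A)$ number $W_A(x)\cdot M/(N_{r(x)}d^-_{r(x)})$, by the edge count above and the description of $W_A(x)$. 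Summing over $x\in\U(A)$ and dividing by $M$ gives exactly \eqref{eq:AZ1}.

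The one genuinely delicate point is the penultimate step, namely that a regular U-poset has the same number of maximal chains through every covering edge; I expect this to come out cleanly from the inductive formulas for $m^\pm_k$ and the relation $N_k\,m^-_k\,m^+_k=M$ for all $k$. Everything else is bookkeeping with the stated convention and with the reduction to the case $\hat0\notin A$.
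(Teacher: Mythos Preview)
Your proposal is correct and follows essentially the same approach as the paper: both arguments count the maximal chains of $\P$ according to the unique covering edge at which they enter the upset $\U(A)$, identify $W_A(x)$ with the number of such boundary edges below $x$, and use regularity to show that every covering edge lies on the same number $M/(N_k d^-_k)$ of maximal chains. Your write-up is slightly more explicit about the edge cases (the reduction to $\hat0\notin A$ and the justification that $W_A(x)$ really counts the lower neighbours of $x$ outside $\U(A)$), but the underlying idea is identical.
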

\begin{proof}
We follow directly the idea of the proof in \cite{AZ1}. Let $\P$ be a regular U-poset and let $n:=r(\P)$. Denote by $\E_A$ the boundary edges of $\U(A)$, that is the edges between $\U(A)$ and $\P\setminus \U(A)$. Furthermore, for any given $u\in \U(A)$ let   $\E_A(u) =\{(u,v)\in \E_A\}$.

Note now that the number of maximal chains  passing through the boundary edges $\E_A$ is equal to the number of all maximal chains. Observe also that for each $x\in \U(A)$ the number of maximal chains passing through the edges  $\E_A(x)$ is $d^+_{r(x)}\cdots d^+_{n-1}\cdot |\E_A(x)|\cdots d^-_{r(x)-1}\cdots d_1^-$. The number of maximal chains in a regular poset $\P$ is $N_0d_0^+d_1^+\cdots d^+_{r(\P)-1}= N_{r(\P)}d^ -_{r(\P)}d^-_{r(\P)-1}\cdots d^-_1$. Hence we have
$$
\sum_{x\in \U(A)} d^+_{r(x)}\cdots d^+_{n-1}\cdots|\E_A(x)|\cdots d^-_{r(x)-1}\cdots d_1^-=1\cdots d_0^+d_1^+\cdots d^+_{n-1}.
$$
Since $\E_A(x)=\emptyset$ for $x\notin \U(A)$, we can rewrite this as
\begin{equation}\label{eq:chains2}
 \sum_{x\in \P}\frac{|\E_A(x)|d_1^-\cdots d^-_{r(x)-1}}{d_0^+d_1^+\cdots d^+_{r(x)-1}}=1.
\end{equation}
Observe  that, in view of regularity, for every $1\leq k\leq n$ the following holds
\begin{equation}\label{eq:levelsize}
N_k=\frac{d^+_0\cdots d^+_{k-1}}{d^-_1\cdots d^-_k}.
\end{equation}
Therefore, (\ref{eq:chains2}) gives
$$
\sum_{x\in \P}\frac{|\E_A(x)|}{d^-_{r(x)}N_{r(x)}}=1.
$$
To finish the proof it remains to show that $|\E_A(x)|=W_A(x).$ The latter is clear since
$\Gamma^+_{r(x)-1}(A^x)\subset \U(A)$ and hence $\E_A(x)=\Gamma^-(x)\setminus \Gamma^+_{r(x)-1}(A^x)$.
\end{proof}

 \bigskip\noindent We  emphasize that  Theorem \ref{eq:AZ1}, as well as all AZ type identities presented below,  can be extended to any regular poset $\P$ as follows:

Given a regular poset $\P$ and a subset $A\subset \P$, let us introduce the function $f: \P\rightarrow \R$, defined as follows:\\
$$
f_A(x)=
  \begin{cases}
    0, & \hbox{if } x\in \P_0\setminus A; \\
    1/N_0, & \hbox{if } x\in \P_0\cap A; \\
    1/N_n, & \hbox{where } n:=r(\P) \mbox{ if } x\in\P_n\setminus U(A)\\
    \frac{W_A(x)}{d^-_{r(x)}N_{r(x)}}, & \hbox{otherwise.}
  \end{cases}
$$
\begin{KL}
For a regular poset $\P$ and a subset $A\subset\P$ we have
$$
\sum_{x\in\P}f_A(x)=1.
$$
\end{KL}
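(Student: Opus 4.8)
The plan is to deduce this extension from Theorem~\ref{th:AZ1} by \emph{adjoining universal bounds}, which turns a general regular poset into a regular U-poset. So let $\P$ be regular with $n:=r(\P)$. The case $A=\emptyset$ is immediate: then $\U(A)=\emptyset$, so $W_A(x)=0$ for every $x$, and $f_A$ is supported on $\P_n$ with value $1/N_n$ there, whence $\sum_x f_A(x)=1$. Assume henceforth $A\neq\emptyset$, and form $\P^\ast$ by placing a new minimum $\hat 0$ below all of $\P$ and a new maximum $\hat 1$ above all of $\P$, with ranks shifted so that $r(\hat 0)=0$, the $\P^\ast$-rank of $x\in\P$ is $r(x)+1$, and $r(\hat 1)=n+2$. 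Since $\P$ is normal, hence graded, $\P^\ast$ is graded, and $\P^\ast$ is regular: the only new degrees are $d^-(\hat 0)=0,\ d^+(\hat 0)=N_0,\ d^-(\hat 1)=N_n,\ d^+(\hat 1)=0$, every element of $\P_0$ now has lower degree $1$ (covered in $\P^\ast$ only by $\hat 0$), every element of $\P_n$ now has upper degree $1$, and all other degrees are those of $\P$. Writing $\hat N_j,\hat d^-_j,\hat W_A$ for quantities computed in $\P^\ast$, we record $\hat N_{i+1}=N_i$ and $\hat d^-_{i+1}=d^-_i$ for $1\le i\le n$, together with $\hat d^-_1\hat N_1=N_0$ and $\hat d^-_{n+2}\hat N_{n+2}=N_n$.

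The second step is to apply Theorem~\ref{th:AZ1} to $\P^\ast$ and $A$ (which, being a subset of $\P$, contains neither $\hat 0$ nor $\hat 1$) and to regroup the identity $\sum_{x\in\P^\ast}\hat W_A(x)/(\hat d^-_{r(x)}\hat N_{r(x)})=1$ according to the levels of $\P^\ast$. The $\hat 0$-term is $0$ (here $A^{\hat 0}=\emptyset$, so $\hat W_A(\hat 0)=0$; this is just the vanishing rank-$0$ contribution, the convention in Theorem~\ref{th:AZ1} being active only when $\P^\ast_0\subseteq A$). For $x\in\P_0$ one has $A^x=\{x\}\cap A$ and $\Gamma^-(x)=\{\hat 0\}$ in $\P^\ast$, whence $\hat W_A(x)=1$ if $x\in A$ and $0$ otherwise; since the denominator is $\hat d^-_1\hat N_1=N_0$, the whole level $\P_0$ contributes $|\P_0\cap A|/N_0$. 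For $x\in\P$ with $1\le r(x)\le n$, neither $\hat 0$ nor $\hat 1$ lies at a rank entering the sets $\Gamma^-(x)$ and $\Gamma^+_{r(x)-1}(A^x)$, so these coincide whether computed in $\P^\ast$ or in $\P$, giving $\hat W_A(x)=W_A(x)$, while the denominator is $\hat d^-_{r(x)+1}\hat N_{r(x)+1}=d^-_{r(x)}N_{r(x)}$; thus these levels contribute $\sum_{1\le r(x)\le n}W_A(x)/(d^-_{r(x)}N_{r(x)})$. Lastly $A^{\hat 1}=A$, $\Gamma^-(\hat 1)=\P_n$ and $\Gamma^+_{n+1}(A)=\P_n\cap\U(A)$ in $\P^\ast$, so $\hat W_A(\hat 1)=|\P_n\setminus\U(A)|$ and the $\hat 1$-term equals $|\P_n\setminus\U(A)|/N_n$. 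Summation therefore yields
$$
\frac{|\P_0\cap A|}{N_0}+\sum_{\substack{x\in\P\\ 1\le r(x)\le n}}\frac{W_A(x)}{d^-_{r(x)}N_{r(x)}}+\frac{|\P_n\setminus\U(A)|}{N_n}=1 .
$$

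The final step is to see that the left-hand side is precisely $\sum_{x\in\P}f_A(x)$. Indeed, in the defining case split of $f_A$ the elements of $\P_0\setminus A$ contribute $0$, those of $\P_0\cap A$ contribute $|\P_0\cap A|/N_0$, those of $\P_n\setminus\U(A)$ contribute $|\P_n\setminus\U(A)|/N_n$, and all remaining elements (those of rank $1,\dots,n-1$ together with $\P_n\cap\U(A)$) contribute the corresponding sum of $W_A(x)/(d^-_{r(x)}N_{r(x)})$; since $W_A$ vanishes on $\P_0$ and on $\P_n\setminus\U(A)$, that sum is unaffected by extending the index range to all $x$ with $1\le r(x)\le n$, and we recover exactly the three terms above. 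I expect the only genuine work to lie in the middle step: computing $\hat W_A$ correctly at the two new elements and at the two extreme original levels, and tracking how the rank shift by $1$ affects the denominators; the rest is bookkeeping, plus the small but necessary observation that $A=\emptyset$ must be treated apart because the chain-counting proof of Theorem~\ref{th:AZ1} tacitly presumes $A$ nonempty. One could equally avoid the auxiliary poset and rerun that chain-counting argument directly in $\P$: the maximal chains carrying no boundary edge of $\U(A)$ are exactly those lying wholly inside $\U(A)$ (minimum in $\P_0\cap A$) and those lying wholly outside it (maximum in $\P_n\setminus\U(A)$), and counting these produces the same two correction terms.
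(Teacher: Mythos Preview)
Your proof is correct and follows essentially the same approach as the paper: adjoin universal bounds $\hat 0,\hat 1$ to form a regular U-poset $\P^\ast$, apply Theorem~\ref{th:AZ1} there, and match the resulting terms against the case split defining $f_A$. Your treatment is considerably more detailed than the paper's (which merely sketches the matching), and you additionally separate out the case $A=\emptyset$, which the paper's argument does not explicitly address; this is a small but genuine improvement, since the chain-counting proof of Theorem~\ref{th:AZ1} indeed presumes $\U(A)\neq\emptyset$.
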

\begin{proof}
To prove the statement, we  add to $\P$ universal maximal and a minimal elements $a$ and $b$, respectively, and then apply
Theorem \ref{th:AZ1} to the  U-poset $\P^*=\P\cup\{a,b\}$.
Observe now that in the identity (\ref{eq:AZ1}), applied for $\P^*$, we have  $\frac{W_A(x)} {d^-_{r(x)} N_{r(x)}}=f_A(x)$
for  $x\in\P\setminus(\P_n\setminus U(A))$. Note also that $\frac{W_A(a)}{d^-_{r(a)}N_{r(a)}}=\frac{W_A(a)}{|\P_n|\cdot 1}=\sum\limits_{x\in \P_n\setminus U(A)}f_A(x)$
 and  $\sum\limits_{x\in \P_n\setminus U(A)}  \frac{W_A(x)} {d^-_{r(x)}N_{r(x)}}=0$. This completes the proof.
\end{proof}

\noindent It is not hard to see that for the Boolean lattice $2^{[n]}$ we have
$$
W_A(x)=  |\bigcap_{a\in A:
a\subseteq x}a|, \quad d^-_{r(x)}=|x|,~ N_{r(x)}=\binom {n}{|x|}
$$
and  (\ref{eq:AZ1}) reduces to the original Ahlswede-Zhang identity (see \cite{AZ1}).  Notice also that if $A$ is an antichain then $W_A(x)=d^-_{r(x)}$ for all $x\in A$
and hence we have the following identity for antichains.
\begin{corollary}\label{th:AZ1A}
For an antichain $A$ in a regular U-poset $\P$ we have
\begin{equation}\label{eq:AZ1A}
\sum_{x\in A}\frac{1}{N_{r(x)}}+ \sum_{x\in \P \setminus A}\frac{W_A(x)} {d^-_{r(x)} N_{r(x)}}=1.
\end{equation}
\end{corollary}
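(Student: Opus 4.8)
The plan is to obtain Corollary~\ref{th:AZ1A} as an immediate specialization of Theorem~\ref{th:AZ1}: start from the identity (\ref{eq:AZ1}), split the sum over $\P$ according to whether $x\in A$ or $x\in\P\setminus A$, and show that each term with $x\in A$ collapses to $1/N_{r(x)}$. The only substantive point is the computation of $W_A(x)$ for $x\in A$.

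First I would verify that for $x\in A$ one has $A^x=\{x\}$. Indeed $x\in A^x$ trivially; and if $a\in A$ satisfies $a\le x$, then $a$ and $x$ are comparable, forcing $a=x$ since $A$ is an antichain. Hence $\Gamma^+_{r(x)-1}(A^x)=\Gamma^+_{r(x)-1}(\{x\})=\emptyset$, because no element of rank $r(x)-1$ can lie above an element of rank $r(x)$. By the definition of $W_A$ this gives $W_A(x)=|\Gamma^-(x)\setminus\emptyset|=|\Gamma^-(x)|=d^-_{r(x)}$, and therefore $\frac{W_A(x)}{d^-_{r(x)}N_{r(x)}}=\frac{1}{N_{r(x)}}$ for every $x\in A$.

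I would then rewrite (\ref{eq:AZ1}) as $\sum_{x\in A}\frac{W_A(x)}{d^-_{r(x)}N_{r(x)}}+\sum_{x\in\P\setminus A}\frac{W_A(x)}{d^-_{r(x)}N_{r(x)}}=1$ and substitute the value just computed into the first sum to get exactly (\ref{eq:AZ1A}). The one degenerate case to handle separately is $\{x\}=\P_0\in A$, which forces $A=\P_0$ since $A$ is an antichain; here one invokes the convention of Theorem~\ref{th:AZ1} that $\frac{W_A(x)}{d^-_0\cdot 1}:=1$, and this is consistent with the corresponding term $1/N_0=1$ appearing in (\ref{eq:AZ1A}).

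There is essentially no obstacle: the result follows directly from Theorem~\ref{th:AZ1}, and the only care required is the short antichain argument identifying $A^x$ with $\{x\}$ (hence $\Gamma^+_{r(x)-1}(A^x)=\emptyset$) together with the bookkeeping for the bottom-element convention.
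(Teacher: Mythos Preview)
Your proposal is correct and follows exactly the paper's approach: the paper simply observes (in the sentence preceding the corollary) that if $A$ is an antichain then $W_A(x)=d^-_{r(x)}$ for all $x\in A$, and then (\ref{eq:AZ1A}) is immediate from (\ref{eq:AZ1}). Your write-up just makes this observation explicit by computing $A^x=\{x\}$ and $\Gamma^+_{r(x)-1}(A^x)=\emptyset$, and correctly handles the bottom-element convention.
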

\noindent This in particular implies  the LYM  inequality for regular posets:
\begin{equation}
\sum_{x\in A}\frac{1}{N_{r(x)}}\leq 1
\end{equation}
which seemingly first appeared in Baker \cite{bak}.

Note next that Corollary \ref{th:AZ1A}  can be  extended to $k$-Sperner systems. By the dual version of the Dilworth theorem  (attributed to Erd\H{o}s
and Szekeres see \cite{lovasz} Ex. 9.32b),  we can decompose such a system $A$ into $k$ disjoint  antichains $A=A_1\cup \ldots\cup A_k$.
 Applying (\ref{eq:AZ1A}) to each antichain $A_i$ and then summing up all $k$ identities we get  the following identity which in turn implies $k$-BLYM inequality for regular posets.
\begin{corollary} \label{th:dual}
Let $A$ be $k$-Sperner system in a regular U-poset $\P$. Then we have
$$
\sum_{x\in A}\frac{1}{N_{r(x)}}+ \sum_{i=1}^k\sum_{x\in \P \setminus A_i}\frac{W_{A_{i}}(x)} {d^-_{r(x)}N_{r(x)}}=k.
$$ \qed
\end{corollary}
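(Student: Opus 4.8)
The plan is to reduce the statement to the antichain identity already proved in Corollary~\ref{th:AZ1A}. First I would invoke the dual form of Dilworth's theorem (the Erd\H os--Szekeres / Mirsky decomposition used above): since $A$ contains no chain of length $k+1$, it can be written as a disjoint union of at most $k$ antichains of $\P$. By splitting blocks further if necessary (possible as long as $|A|\ge k$) one may take the decomposition to consist of exactly $k$ \emph{nonempty} antichains, $A=A_1\cup\cdots\cup A_k$.

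Next, for each index $i$ I would apply the antichain identity (\ref{eq:AZ1A}) to $A_i$ in the regular U-poset $\P$, namely
$$
\sum_{x\in A_i}\frac{1}{N_{r(x)}}+ \sum_{x\in \P\setminus A_i}\frac{W_{A_i}(x)}{d^-_{r(x)}N_{r(x)}}=1 ,
$$
with the convention of Theorem~\ref{th:AZ1} in force in the degenerate case $A_i=\P_0$ (and nothing extra is needed when $A_i=\P_n$, where the sum over $\P\setminus A_i$ simply vanishes).

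Finally I would sum these $k$ equalities over $i=1,\dots,k$. On the left-hand side the second double sum is exactly the one appearing in the statement, while the terms $\sum_{x\in A_i}1/N_{r(x)}$ assemble into $\sum_{x\in A}1/N_{r(x)}$ precisely because $A_1,\dots,A_k$ partition $A$; on the right-hand side we obtain $k\cdot 1=k$. This is the claimed identity.

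Essentially no step here is hard: the entire content sits in Corollary~\ref{th:AZ1A} (hence in Theorem~\ref{th:AZ1}) together with the dual Dilworth partition. The one point requiring care is bookkeeping: the decomposition must be taken with exactly $k$ nonempty blocks, since (\ref{eq:AZ1A}) is an identity only for a nonempty antichain and one copy of "$=1$" has to be summed for each of the $k$ indices; the edge cases involving $\P_0$ or $\P_n$ are absorbed by the convention already fixed for Theorem~\ref{th:AZ1}. \qed
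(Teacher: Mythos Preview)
Your proposal is correct and follows exactly the paper's own argument: the paragraph preceding the corollary already sketches the proof as ``decompose $A$ into $k$ disjoint antichains via the dual Dilworth theorem, apply (\ref{eq:AZ1A}) to each, and sum.'' Your added remarks about taking the blocks nonempty and handling the $\P_0$ convention are sensible bookkeeping that the paper leaves implicit.
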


\begin{remark}
It is important to recognize that Theorem \ref{th:AZ1} does not hold for all normal posets. For example the poset on Figure~\ref{counter} (a) is normal,  however does not have this property. Indeed, take  the antichain $A= \{a,c\}$. Then $W_A(a)=W_A(b)=W_A(c)=1$ and $N_{r(a)}= N_{r(b)}= N_{r(c)}=2,$ furthermore,  $d^-_{r(a)}=d^-_{r(c)}=1$ and  $ d^-_{r(b)}=2.$ Therefore,
$$
\sum_{x\in P}\frac{W_A(x)}{d^-_{r(x)}N_{r(x)}}=1/2 +1/2 + 1/4 >1.
$$
\end{remark}

\medskip\noindent
Ahlswede and Zhang found yet another generalization of  identity (\ref{eq:AZ1}) for the Boolean lattice (see \cite{AZ2}) which in turn is a sharpening of the  Bollob\'as inequality \cite{Bo}. This latter result is based on a higher level regularity of the Boolean lattice,
 therefore we also have to restrict ourselves to a  subclass of regular posets with  stronger regularities.

We call a U-poset $\P$ {\em strongly regular} if for all  pairs $a,b \in \P$, with $a< b$,  the number $|\Gamma^+_i(a)\cap \Gamma^-_j(b)|$ depends only on $i,j$ and  $r(a)$, $r(b)$.

Thus, we can define the quantity $\lambda_i(k,l)$ for all integers $0\leq k\leq l\leq r(\P)$ and $k\leq i\leq l$ as $\lambda_i(k,l)=|\Gamma^+_i(a)\cap \Gamma^-_i(b)|$, ~where $a,b\in \P,~ a\leq b$ and $r(a)=k, r(b)=l$.  For all other triples $(i,k,l)$ of integers we put  $\lambda_i(k,l)=0$. Note that a strongly regular U-poset is regular.
\begin{theorem} \label{th:AZ2}
Let $A=\{a_1,\ldots,a_m\}$ and $B=\{b_1,\ldots,b_m\}$ be subsets of a strongly regular U-poset  $\P$ such that $a_i\leq b_j$ ~if and only if~ $i=j$. Let also $k_i:=r(a_i)$ and $l_i:=r(b_i)$; $i=1,\ldots, m$. Then we have
\begin{equation}
\sum_{i=1}^m \beta(k_i,l_i)+  \sum_{x\in \U(A)\setminus\D(B)} \frac{W_A(x)}{d^-_{r(x)}N_{r(x)}}=1,
\end{equation}
where
$$
\beta(k_i,l_i):=\sum_{j=0}^{l_i-k_i}\frac{\lambda_{k_i+j} (k_i,l_i)} {d^-_{k_i+j}N_{k_i+j}}\cdot \left (d^-_{r(x)}-\lambda_{k_i+j-1}(k_i,k_i+j)\right ).
$$
\end{theorem}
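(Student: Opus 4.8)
The plan is to mimic the maximal-chain counting argument used for Theorem~\ref{th:AZ1}, but now tracking chains relative to the two-sided configuration $\U(A)\cap\D(B)$ instead of just $\U(A)$. First I would observe that since $a_i\le b_j$ iff $i=j$, the sets $\D(b_i)\cap\U(a_i)$ are pairwise disjoint, so $\U(A)\cap\D(B)=\bigcup_{i=1}^m\bigl(\U(a_i)\cap\D(b_i)\bigr)$ is a disjoint union; this is the analogue of the disjointness of the ``intervals'' in the Ahlswede--Zhang second identity. The boundary edges of $\U(A)$ split into two types: those that leave $\U(A)$ through the ``top'' (i.e.\ edges incident to some $x\in\U(A)\setminus\D(B)$, which are exactly the $\E_A(x)$ counted already in Theorem~\ref{th:AZ1}), and those that lie inside $\D(B)$ and hence get absorbed into the intervals $\U(a_i)\cap\D(b_i)$. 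The identity to prove is just a regrouping of the single sum $\sum_{x\in\P}W_A(x)/(d^-_{r(x)}N_{r(x)})=1$ into the part over $\U(A)\setminus\D(B)$ (which stays as written) plus the part over $\bigcup_i(\U(a_i)\cap\D(b_i))$ together with the part over $\P\setminus\U(A)$, and the claim is that this second block contributes exactly $\sum_i\beta(k_i,l_i)$.

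So the core computation reduces to a single interval: fix $i$, write $k=k_i$, $l=l_i$, $a=a_i$, $b=b_i$, and I want to show that
$$
\sum_{x\in\U(a)\cap\D(b)}\frac{W_A(x)}{d^-_{r(x)}N_{r(x)}}\;+\;(\text{share of }\P\setminus\U(A)\text{ attached to this interval})\;=\;\beta(k,l).
$$
On the interval, strong regularity kicks in: for $x$ with $r(x)=k+j$ and $a\le x\le b$, the number of such $x$ is $\lambda_{k+j}(k,l)$, and within the interval the relevant lower neighbours of $x$ lying above $a$ number $\lambda_{k+j-1}(k,k+j)$, so $W_A(x)$—the lower neighbours of $x$ \emph{not} in $\U(A)$—equals $d^-_{r(x)}-\lambda_{k+j-1}(k,k+j)$ (using that $A^x=\{a\}$ when $x$ is in this interval, which is where the hypothesis $a_i\le b_j$ iff $i=j$ is essential, guaranteeing no other $a_{i'}$ sits below such an $x$). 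Summing $W_A(x)/(d^-_{r(x)}N_{r(x)})$ over the $\lambda_{k+j}(k,l)$ choices of $x$ at level $k+j$, then over $j=0,\dots,l-k$, yields precisely the displayed formula for $\beta(k,l)$; the leftover boundary edges pointing into $\P\setminus\U(A)$ are accounted for because $W_A(x)$ already counts exactly those edges (as in the last line of the proof of Theorem~\ref{th:AZ1}, $\E_A(x)=\Gamma^-(x)\setminus\Gamma^+_{r(x)-1}(A^x)$).

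Concretely I would carry out the steps in this order: (1) invoke Theorem~\ref{th:AZ1} for $\P$ and $A$ to get $\sum_{x\in\P}W_A(x)/(d^-_{r(x)}N_{r(x)})=1$; (2) partition $\P=(\U(A)\setminus\D(B))\;\dot\cup\;\bigl(\bigcup_i(\U(a_i)\cap\D(b_i))\bigr)\;\dot\cup\;(\P\setminus\U(A))$, checking disjointness from $a_i\le b_j\Leftrightarrow i=j$; (3) note $W_A(x)=0$ for $x\notin\U(A)$ so the last block contributes nothing directly; (4) on each interval $\U(a_i)\cap\D(b_i)$ identify $A^x=\{a_i\}$, apply strong regularity to evaluate $|\Gamma^-(x)\cap\Gamma^+_{r(x)-1}(a_i)|=\lambda_{k_i+j-1}(k_i,k_i+j)$ and hence $W_A(x)=d^-_{k_i+j}-\lambda_{k_i+j-1}(k_i,k_i+j)$, and count the $\lambda_{k_i+j}(k_i,l_i)$ elements at each level; (5) sum to recognize $\beta(k_i,l_i)$ and conclude. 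The main obstacle I anticipate is step (4): one must be careful that $W_A(x)$ for $x$ in the interval is computed relative to the \emph{whole} of $A$, not just $a_i$, and that the ``boundary'' edges counted by $W_A(x)$ genuinely leave $\U(A)$ rather than merely leaving the interval—here the hypothesis forces $A^x=\{a_i\}$ for any $x\le b_i$, so $\Gamma^+_{r(x)-1}(A^x)=\Gamma^+_{r(x)-1}(a_i)$ and the strongly-regular count applies cleanly; a secondary subtlety is handling the degenerate endpoints (when $k_i=0$ or when $j=l_i-k_i$, where the $\lambda$ with a negative or out-of-range index is $0$ by the stated convention), which just requires quoting the boundary conventions already fixed before Theorem~\ref{th:AZ2}.
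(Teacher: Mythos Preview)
Your proposal is correct and follows essentially the same approach as the paper. The paper redoes the maximal-chain count from Theorem~\ref{th:AZ1} and partitions the chains according to whether their boundary edge sits in some interval $S(a_i,b_i)=\{x:a_i\le x\le b_i\}$ or in $\U(A)\setminus\D(B)$, while you instead invoke Theorem~\ref{th:AZ1} as a black box and partition the resulting sum over $\P$ using $\P=(\U(A)\setminus\D(B))\,\dot\cup\,\bigcup_i S(a_i,b_i)\,\dot\cup\,(\P\setminus\U(A))$; the two are the same decomposition, and your computation of $W_A(x)=d^-_{r(x)}-\lambda_{r(x)-1}(k_i,r(x))$ on each interval via $A^x=\{a_i\}$ is exactly the paper's final observation.
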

\begin{proof}
For every pair $a,b\in\P$ with $a\leq b$ let us denote $S(a,b)=\{x\in\P:a\leq x\leq b\}$.  By the definition of strong regularity
$$
|S(a,b)|=\sum_{j=0}^{r(b)-r(a)}\lambda_{r(a)+j}(r(a),r(b)).
$$
Note also that (by the condition of the theorem) $S(a_i,b_i)\cap S(a_j,b_j)=\emptyset$ for all $i\neq j$ ($i,j\in [m]$). We proceed now like in the proof of Theorem~\ref{th:AZ1}  counting all maximal chains.

We count first all maximal chains that meet $S(a_i,b_i)$ for  $i=1,\dots,m$. This number (by the same arguments as in the proof of Theorem~\ref{th:AZ1})  equals
$$
\sum_{a_i\leq x\leq b_i}\frac {W_A(x)}{d^-_{r(x)}N_{r(x)}}\cdot d_0^+d_1^+\ldots d^+_{n-1}.
$$
Note that these $m$ sets of  maximal chains are pairwise disjoint. The number of remaining maximal chains equals
$$
\sum_{x\in \U(A)\setminus\D(B)}\frac{W_A(x)}{d^-_{r(x)}N_{r(x)}}\cdot  d_0^+d_1^+\ldots d^+_{n-1}.
$$
Thus, we have
$$
\sum_{i=1}^m\sum_{a_i\leq x\leq b_i}\frac {W_A(x)}{d^-_{r(x)}N_{r(x)}}+ \sum_{x\in \U(A)\setminus\D(B)}\frac{W_A(x)}{d^-_{r(x)}N_{r(x)}}=1.
$$
Observe now that for each $a_i\leq x\leq b_i$ we have $W_A(x)=d^-_{r(x)} -\lambda_{r(x)-1}(k_i,r(x))$. Hence, we get
$$
\sum_{a_i\leq x\leq b_i}\frac {W_A(x)}{d^-_{r(x)}N_{r(x)}}= \beta(k_i,l_i);~  i=1,\ldots,m.
$$
This completes the proof.
\end{proof}
\noindent Again, when we apply this theorem to the Boolean lattice, we get back verbatim the original second AZ--identity (see \cite{AZ2}), since in this case we have
\begin{eqnarray*}
\beta(k_i,l_i)&=& \sum_{a_i\subseteq x\subseteq b_i}\dfrac{|a_i|}{|x|\binom n{|x|}}= \\
&=&\sum_{k=0}^{|b_i|-|a_i|}\binom {|b_i|-|a_i|}k\cdot \frac{|a_i|}{(|a_i|+k)\binom n{|a_i|+k}}=
\dfrac 1{ \binom{n-|b_i|+|a_i|}{|a_i|}}.
\end{eqnarray*}

\subsection{Strict BLYM inequalities for normal posets}\label{sec:strict}
\smallskip\noindent
In this subsection we turn our attention to  {\em strict} Sperner theorems and {\em strict} BLYM inequalities. We  start with a strong  result of Kleitman (which we use later), which in fact extends the BLYM inequality to a wide class of posets. The following notion was introduced in Kleitman \cite{Kl}.
\begin{definition}
If $\C(\P)$ denotes the set of maximal chains in a graded poset $\P$ then a {\bf regular covering} of $P$ is a function $f: \C(P)\rightarrow \R_+ \cup\{0\}$ such that   the  following  holds:
\begin{equation}\label{eq:cover}
(a) \sum_{C\in\C} f(C)=1; \quad (b) \ \forall a\in P\ :\ \sum_{C\in \C: a\in C}f(C)=\frac{1} {N_{r(a)}}.
\end{equation}
\end{definition}
\begin{theorem}[Kleitman, \cite{Kl}] \label{th:kleitman}
A poset satisfies the $k$-BLYM inequalities for all possible $k$ if and only if it is a  normal poset, which, in turn, is equivalent to the existence of a {\em regular chain covering}.
\end{theorem}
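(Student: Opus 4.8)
The plan is to prove both asserted equivalences in one stroke, via the cycle of implications $(i)\Rightarrow(ii)\Rightarrow(iii)\Rightarrow(i)$, where $(i)$ abbreviates ``$\P$ satisfies the $k$-BLYM inequality for every $k$'', $(ii)$ is ``$\P$ is normal'', and $(iii)$ is ``$\P$ has a regular chain covering''. I take $\P$ to be graded throughout: this is automatic once $\P$ is normal, and if a rank-function poset $\P$ is not graded --- say it has a maximal element $m$ with $r(m)<r(\P)$ --- then $\{m\}\cup\P_{r(\P)}$ is an antichain with $\sum_x 1/N_{r(x)}=1/N_{r(m)}+1>1$, so $(i)$ fails (a minimal element of positive rank is treated symmetrically, with $\P_0$ in place of $\P_{r(\P)}$).

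For $(iii)\Rightarrow(i)$: if $f$ is a regular chain covering and $A$ is a $k$-Sperner system, then every maximal chain $C$ meets $A$ in at most $k$ points (a bigger intersection would be a chain of length $k+1$ inside $A$), so by $(\ref{eq:cover})$
\[
\sum_{x\in A}\frac{1}{N_{r(x)}}=\sum_{x\in A}\ \sum_{C\in\C(\P):\,x\in C}f(C)=\sum_{C\in\C(\P)}f(C)\,|A\cap C|\le k\sum_{C\in\C(\P)}f(C)=k .
\]
For $(i)\Rightarrow(ii)$ the $1$-BLYM inequality already suffices: if $\P$ is not normal, choose $i$ and $S\subseteq\P_i$ with $|S|/N_i>|\Gamma^-_{i-1}(S)|/N_{i-1}$ and set $T=\Gamma^-_{i-1}(S)$. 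Then $\mathcal{F}:=S\cup(\P_{i-1}\setminus T)$ is an antichain (elements of one level are incomparable, and an $x\in\P_{i-1}\setminus T$ below some $y\in S$ would satisfy $x\prec y$, hence $x\in\Gamma^-_{i-1}(S)=T$), while $\sum_{x\in\mathcal{F}}1/N_{r(x)}=|S|/N_i+(N_{i-1}-|T|)/N_{i-1}=1+(|S|/N_i-|T|/N_{i-1})>1$, contradicting $(i)$.

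The substantive step is $(ii)\Rightarrow(iii)$. I would first observe that, for each consecutive pair of levels, the ``downward'' normalized matching property of the definition is equivalent to the ``upward'' one, $|B|/N_{i-1}\le|\Gamma^+_i(B)|/N_i$ for all $B\subseteq\P_{i-1}$ (apply the downward inequality to $\P_i\setminus\Gamma^+_i(B)$). Next, for each $i<n:=r(\P)$ I would construct a fractional level-to-level matching: weights $w_i\ge 0$ on the cover edges between $\P_i$ and $\P_{i+1}$ with $\sum_{b\succ a}w_i(a,b)=1/N_i$ for all $a\in\P_i$ and $\sum_{a\prec b}w_i(a,b)=1/N_{i+1}$ for all $b\in\P_{i+1}$. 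This is a transportation feasibility problem that max-flow--min-cut settles: in the network with source $s$, sink $t$, arcs $s\to a$ of capacity $1/N_i$, arcs $b\to t$ of capacity $1/N_{i+1}$, and cover arcs $a\to b$ of unbounded capacity, any $s$--$t$ cut disjoint from the cover arcs has value $|R|/N_i+|Q|/N_{i+1}$ with $\Gamma^+_{i+1}(\P_i\setminus R)\subseteq Q$, which by the upward property is at least $|R|/N_i+|\P_i\setminus R|/N_i=1$; hence the maximum flow has value $1$, saturates every source arc, and its restriction to the cover edges is the desired $w_i$. Finally I would splice the $w_i$ together: regarding $K_i(a,b):=N_i\,w_i(a,b)$ as the (row-stochastic) transition kernel $\P_i\to\P_{i+1}$ of the Markov chain on $\P_0,\dots,\P_n$ with uniform initial law on $\P_0$, I put $f(C):=\frac{1}{N_0}\prod_{i=0}^{n-1}N_i\,w_i(x_i,x_{i+1})$ for $C=(x_0\prec\cdots\prec x_n)$. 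Then $\sum_{C\in\C(\P)}f(C)=1$, and an induction on $j$ using the right-marginal property of $w_j$ gives $\sum_{C:\,x_j=a}f(C)=1/N_j$ for all $a\in\P_j$; since the $f$-mass of the maximal chains through a fixed $a$ factors as (mass of chains from $\P_0$ to $a$)$\times$(mass of chains from $a$ to $\P_n$), with the second factor equal to $1$, this yields $\sum_{C\ni a}f(C)=1/N_{r(a)}$, i.e.\ $(\ref{eq:cover})$.

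The main obstacle is exactly the construction of the $w_i$: turning the combinatorial set-size inequality (normalized matching) into a global fractional edge-weighting is the one place calling for a genuine tool --- linear-programming duality, i.e.\ max-flow--min-cut (equivalently, a fractional deficiency form of Hall's theorem) --- alongside the small downward/upward equivalence. The remaining steps, $(iii)\Rightarrow(i)$, $(i)\Rightarrow(ii)$, and the Markov-chain bookkeeping that assembles the $w_i$ into a regular chain covering, are routine.
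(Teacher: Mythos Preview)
The paper does not supply a proof of this theorem: it is stated as Kleitman's result \cite{Kl} and used thereafter as a tool, with no argument given in the text. There is thus no in-paper proof to compare your proposal against.

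That said, your argument is correct and is essentially the standard one. The implications $(iii)\Rightarrow(i)$ and $(i)\Rightarrow(ii)$ are handled cleanly, including the side observation that a non-graded ranked poset already violates $1$-BLYM. For $(ii)\Rightarrow(iii)$ you follow the classical route (Kleitman; equivalently Graham--Harper \cite{GH}): the normalized matching condition between consecutive levels, fed into max-flow--min-cut, produces for each $i$ a nonnegative weighting $w_i$ on the cover edges of $G_{i,i+1}(\P)$ with prescribed marginals $1/N_i$ and $1/N_{i+1}$; the product
\[
f(x_0\prec\cdots\prec x_n)=\frac{1}{N_0}\prod_{i=0}^{n-1}N_i\,w_i(x_i,x_{i+1})
\]
is then a regular chain covering. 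Your Markov-chain bookkeeping---row-stochasticity of $K_i=N_iw_i$ together with the one-line induction showing the level-$j$ marginal of the chain is uniform on $\P_j$---is precisely what verifies condition (\ref{eq:cover})(b). The cut computation in the flow argument is also correct once you invoke the upward form of normalized matching, and your complementation argument for the downward/upward equivalence is valid.
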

Recall that any regular poset is  normal, so Kleitman's theorem applies for them. It is important to recognize that normality does not always imply  that the poset also satisfies the strict BLYM inequality, or  strict Sperner property. In fact, none of the normal posets on Figure~\ref{counter} satisfy the strict Sperner property.
\begin{figure}[h!]
\centering
\subfloat [] {
\begin{tikzpicture}[scale=0.4]
\filldraw [black,] (3,0) circle (8pt);
\filldraw [black] (1,2.5) circle (8pt);
\filldraw [black] (5,2.5) circle (8pt);
\node at (0,5) {$\mathbf{a}$};
\node at (6,5) {$\mathbf{b}$};
\node at (6,2.5) {$\mathbf{c}$};
\filldraw [black] (1,5) circle (8pt);
\filldraw [black] (5,5) circle (8pt);
\filldraw [black] (3,7.5) circle (8pt);
\draw [line width=1pt]  ( 3,0) -- ( 1,2.5);
\draw [line width=1pt]  ( 1,2.5) -- ( 1,5);
\draw [line width=1pt]  ( 1,5) -- ( 3,7.5);
\draw [line width=1pt]  ( 3,7.5) -- ( 5,5);
\draw [line width=1pt]  ( 5,5) -- ( 5,2.5);
\draw [line width=1pt]  ( 3,0) -- ( 5,2.5);
\draw [line width=1pt]  ( 1,2.5) -- ( 5,5);
\end{tikzpicture}
    }
    \qquad\qquad\qquad
\subfloat[]{
\begin{tikzpicture}[scale=0.4]
\filldraw [black] (3,0) circle (8pt);
\filldraw [black] (1,2.5) circle (8pt);
\filldraw [black] (5,2.5) circle (8pt);
\filldraw [black] (1,5) circle (8pt);
\filldraw [black] (5,5) circle (8pt);
\filldraw [black] (3,7.5) circle (8pt);
\draw [line width=1pt]  ( 3,0) -- ( 1,2.5);
\draw [line width=1pt]  ( 1,2.5) -- ( 1,5);
\draw [line width=1pt]  ( 1,5) -- ( 3,7.5);
\draw [line width=1pt]  ( 3,7.5) -- ( 5,5);
\draw [line width=1pt]  ( 5,5) -- ( 5,2.5);
\draw [line width=1pt]  ( 3,0) -- ( 5,2.5);
\node at (7,0) {{$\P_0$}};
\node at (7,2.5) {{$\P_1$}};
\node at (7,5) {{$\P_2$}};
\node at (7,7.5) {{$\P_3$}};
\end{tikzpicture}
}
\caption{These posets are not strict Sperner.
(a) normal, but not {\em regular} poset, the first AZ--identity does not hold;
(b) normal, but not {\em level-connected} poset, since  the induced bipartite graph $G_{1,2}$ is not connected. Neither poset  satisfies the strict Sperner property.}
\label{counter}
\end{figure}
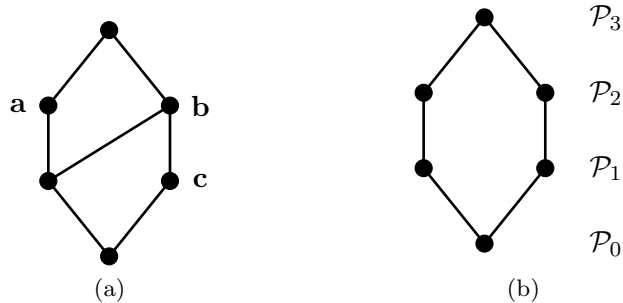

\medskip\noindent The study of the strict Sperner property of normal posets was initiated by Engel (\cite{E1,E2}). He introduced the following strengthening of the normalized matching property:
\begin{definition}
A  normal poset $\P$ is called {\bf strictly normal} if it satisfies the {\bf strict normalized matching property}, that is for  every proper subset $A\subset \P_i$; $i\in\{1,\ldots,r(\P)\}$ we have  $|A|/N_i <\left |\Gamma^- (A) \right |/N_{i-1}$.
\end{definition}
\begin{theorem}[Engel \cite{E1}]\label{th:engel}
Every strictly normal poset has strict $k$-Sperner property. \qed
\end{theorem}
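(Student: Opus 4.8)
The plan is to reduce, in two stages, to the case where all Whitney numbers of $\P$ are equal, and to dispose of that case by induction on the rank. Put $n:=r(\P)$ and let $M_k$ denote the sum of the $k$ largest Whitney numbers; since a $k$-Sperner system agrees with an $(n+1)$-Sperner system once $k>n+1$, assume $k\le n+1$, so that the union of $k$ largest levels is a homogeneous $k$-Sperner system of size $M_k$. It then suffices to prove that every $k$-Sperner system $A$ with $|A|=M_k$ is homogeneous. For the size bound itself I would invoke Theorem~\ref{th:kleitman}: a strictly normal poset is normal, hence carries a regular chain covering $f$, and since every maximal chain meets $A$ in at most $k$ points, summing (\ref{eq:cover})(b) over $A$ gives $\sum_{x\in A}1/N_{r(x)}=\sum_C f(C)\,|C\cap A|\le k$. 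Writing $p_i:=|A\cap\P_i|/N_i\in[0,1]$ this reads $\sum_i p_i\le k$, so $|A|=\sum_i p_iN_i$ is at most the maximum of a linear functional over $\{0\le p_i\le 1,\ \sum_i p_i\le k\}$, which is $M_k$; and equality forces the profile $(p_i)$ to be $1$ on every level with $N_i$ strictly larger than the $k$-th largest value $N^*$, $0$ on every level with $N_i<N^*$, with total mass $t:=k-\#\{i:N_i>N^*\}$ on the levels of value exactly $N^*$.

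If $t$ equals the number of levels with $N_i=N^*$ — in particular whenever the $k$-th and $(k{+}1)$-st largest Whitney numbers differ — then $A$ is exactly the union of the $k$ largest levels and we are done. Otherwise $A$ could a priori partially fill some $N^*$-level, and ruling this out is where strict normality is needed. I would first record two consequences of strict normality. It is self-dual: the passage from the down-shadow inequality to the up-shadow inequality applies verbatim to proper subsets, so $|A|/N_i<|\Gamma^+(A)|/N_{i+1}$ for every proper nonempty $A\subseteq\P_i$. And each bipartite graph $G_{i,i+1}(\P)$ is connected, since a disconnection into proper parts $\P_i=P\sqcup P'$, $\P_{i+1}=Q\sqcup Q'$ with $\Gamma^+(P)\subseteq Q$, $\Gamma^-(Q')\subseteq P'$ would yield $|P|/N_i<|Q|/N_{i+1}$ and $|Q'|/N_{i+1}<|P'|/N_i$, whose sum is $2<2$. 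Next, let $\Q$ be the union of the $N^*$-levels as a ranked poset under the induced order. Using that $|\Gamma^-_i(B)|/N_i$ is nondecreasing as $i$ decreases and stays at $1$ once it reaches it, one checks $\Q$ is again strictly normal with all Whitney numbers $N^*$; and prolonging any chain of $A\cap\Q$ through the forced full levels (grading lets one interpolate the missing levels) shows $A\cap\Q$ is a $t$-Sperner system of $\Q$ of size $tN^*$, hence a maximum $t$-Sperner system of $\Q$. If $\Q\neq\P$, then $\Q$ has fewer levels than $\P$ and induction gives that $A\cap\Q$ — hence $A$ — is homogeneous; so we may assume all Whitney numbers of $\P$ equal some $N$.

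For the equal-Whitney case I would argue by induction on $n$. Normalized matching in a poset with equal level sizes is exactly Hall's condition in each $G_{i,i+1}$, so $\P$ is a disjoint union of $N$ maximal chains; moreover strict normalized matching applied to $\P_i\setminus\{a\}$ shows Hall's condition survives the deletion of both endpoints of any prescribed edge $a\prec b$, so every maximal chain lies in some such decomposition. Since the $N$ chains of a decomposition partition $\P\supseteq A$ and each meets the $k$-Sperner set $A$ in at most $k$ points while the meetings sum to $|A|=kN$, every chain of every decomposition — hence every maximal chain of $\P$ — meets $A$ in exactly $k$ points. Now suppose $A\cap\P_0$ were a proper nonempty subset; by connectedness of $G_{0,1}$ choose $c\in\P_1$ with neighbours $a\in A\cap\P_0$ and $a'\in\P_0\setminus A$, and complete $a\prec c$ and $a'\prec c$ to maximal chains agreeing above level $1$. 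These two chains meet $A$ equally often above level $1$, yet one contains its bottom point in $A$ and the other does not, so they meet $A$ a different number of times in total — contradiction. Hence $A\cap\P_0\in\{\emptyset,\P_0\}$ and, symmetrically, $A\cap\P_n\in\{\emptyset,\P_n\}$. If either is empty, delete that level and invoke the inductive hypothesis; if both are full, then $A\setminus(\P_0\cup\P_n)$ is a maximum $(k-2)$-Sperner system of the strictly normal equal-Whitney poset on levels $1,\dots,n-1$ (a $(k-1)$-chain there would extend down into $\P_0$ and up into $\P_n$, giving a $(k+1)$-chain of $A$), and it is homogeneous by induction. In every case $A$ is homogeneous.

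I expect the crux to be the equal-Whitney step, and within it the lemma that strict normalized matching lets one complete an arbitrary maximal chain to a decomposition of $\P$ into $N$ maximal chains: this is precisely what upgrades ``every chain of one fixed decomposition meets $A$ in exactly $k$ points'' to the same statement for all maximal chains, and hence makes the short contradiction at level $0$ available. The remaining ingredients — the linear-programming bound from Theorem~\ref{th:kleitman}, the profile analysis, the self-duality and level-connectedness of strictly normal posets, the reduction to equal Whitney numbers, and the concluding induction on $n$ — should all be routine once that lemma is in hand.
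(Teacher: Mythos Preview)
The paper does not prove Theorem~\ref{th:engel}: it is quoted from Engel, and the \qed\ simply closes the citation. The paper does, however, sketch a very short derivation in the Remark following Corollary~\ref{th:regular-lev}: using a regular chain covering (Theorem~\ref{th:kleitman}) one shows, exactly as in the computation leading to Lemma~\ref{lm:4}, that every maximum $k$-Sperner system $F$ in a normal poset satisfies $\sum_{a\in F}1/N_{r(a)}=k$; then Theorem~\ref{th:strict} (the strict $k$-LYM property) immediately forces $F$ to be homogeneous.

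Your argument is correct but takes a genuinely different and considerably longer route. Rather than establishing the LYM \emph{equality} and invoking Theorem~\ref{th:strict}, you extract from the LYM \emph{inequality} a linear-programming constraint on the profile $(p_i)$, reduce to the induced poset $\Q$ on the levels of the critical Whitney value $N^*$, and then settle the equal-Whitney case by an inductive chain-decomposition argument. The gain is self-containment --- you never need Theorem~\ref{th:strict} --- and the equal-Whitney lemma (that under strict normality every cover edge, hence every maximal chain, can be completed to a partition of $\P$ into $N$ maximal chains, so every maximal chain meets a maximum $k$-Sperner family in exactly $k$ points) is a nice observation in its own right. The paper's route, by contrast, gets the LYM equality almost for free from the regular-covering machinery already developed, and then the one-line shadow-replacement proof of Theorem~\ref{th:strict} does all the work. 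One small slip in your write-up: in the level-connectedness argument you add the wrong pair of strict inequalities; use $|Q|/N_{i+1}<|\Gamma^-(Q)|/N_i=|P|/N_i$ and $|Q'|/N_{i+1}<|\Gamma^-(Q')|/N_i=|P'|/N_i$ (strict normality applied to both components), which sum to $1<1$.
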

\noindent Furthermore Engel also proved (\cite{E2}, \cite[Ch. 4.6]{E3})  that the direct product of  strictly normal posets with  log-concave Whitney numbers is again a strictly normal posets with  log-concave Whitney numbers. Therefore, Theorem \ref{th:engel} holds for the direct products of such posets.

The following easy result will be used in Section \ref{sec:twopart}.
\begin{theorem} \label{th:strict}
Let $\P$ be a strictly normal poset. Then $\P$  satisfies the  strict $k$-LYM inequality, that is if  $F\subset \P$ is a $k$-Sperner system such that
\begin{equation}\label{eq-LYMeq}
\sum_ {a\in F}\frac 1{N_{r(a)}}=k
\end{equation}
holds, then $F$ is homogeneous.
\end{theorem}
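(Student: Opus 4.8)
The plan is to reduce to the case of antichains and then handle that case by a one-level ``compression''. Since $F$ is a $k$-Sperner system in a graded poset, the dual version of Dilworth's theorem used above lets us partition $F=A_1\cup\cdots\cup A_k$ into $k$ pairwise disjoint antichains. Kleitman's Theorem~\ref{th:kleitman} (normality implies the BLYM inequality) gives $\sum_{a\in A_j}1/N_{r(a)}\le 1$ for each $j$; summing over $j$ and using that the $A_j$ partition $F$, the hypothesis $\sum_{a\in F}1/N_{r(a)}=k$ forces $\sum_{a\in A_j}1/N_{r(a)}=1$ for every $j$ (in particular each $A_j$ is nonempty). Thus it suffices to prove the statement for $k=1$: if $A$ is an antichain in a strictly normal poset with $\sum_{a\in A}1/N_{r(a)}=1$, then $A=\P_\ell$ for a single level $\ell$. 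Granting this, each $A_j=\P_{\ell_j}$; disjointness forces the $\ell_j$ to be distinct, so $F=\bigcup_j \P_{\ell_j}$ is homogeneous.

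For the antichain case, set $M=\max\{r(a):a\in A\}$ and $A_i=A\cap\P_i$. If $A_M=\P_M$, then since $\P$ is graded every element of rank $<M$ lies below some element of $\P_M$ (follow a maximal chain through it up to level $M$), so $A$ cannot contain any element of rank $<M$ without violating incomparability; hence $A=\P_M$ and we are done. So assume $A_M\subsetneq\P_M$; note $A_M\neq\emptyset$ as $M$ is attained. If $M=0$ then $\sum_{a\in A}1/N_{r(a)}=|A_0|/N_0<1$, contradicting the hypothesis, so $1\le M\le r(\P)$.

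Now ``push $A_M$ down one level'': put $A':=(A\setminus A_M)\cup\Gamma^-_{M-1}(A_M)$. Because $A$ is an antichain, $A_{M-1}$ and $\Gamma^-_{M-1}(A_M)$ are disjoint, and more generally no element of $\Gamma^-_{M-1}(A_M)$ is comparable to any element of $A\setminus A_M$ (a comparability would, via the element of $A_M$ lying above it, create a $2$-chain inside $A$); hence $A'$ is again an antichain. The only change in the LYM sum occurs at levels $M-1$ and $M$, giving $\sum_{a\in A'}1/N_{r(a)}=\sum_{a\in A}1/N_{r(a)}-|A_M|/N_M+|\Gamma^-_{M-1}(A_M)|/N_{M-1}$. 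Since $A_M$ is a proper nonempty subset of $\P_M$ and $1\le M\le r(\P)$, the strict normalized matching property yields $|\Gamma^-_{M-1}(A_M)|/N_{M-1}>|A_M|/N_M$, so $\sum_{a\in A'}1/N_{r(a)}>1$. But $A'$ is an antichain in a normal poset, so the BLYM inequality forces $\sum_{a\in A'}1/N_{r(a)}\le 1$, a contradiction. This proves the antichain case and hence the theorem.

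The main obstacle is the antichain case, and within it the delicate point is checking that the pushed-down family $A'$ is still an antichain — this is exactly where the antichain hypothesis on $A$ is invoked twice (disjointness at level $M-1$, and the absence of comparabilities with the lower levels). Everything else — the reduction via the dual Dilworth theorem and the bookkeeping with the strict inequality — is routine. (One could instead ``push $A_M$ up'' using the up-version of the strict normalized matching property, which follows from the stated down-version by a standard complementation argument; working downward avoids needing that lemma.)
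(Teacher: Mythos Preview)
Your proof is correct and follows essentially the same approach as the paper's: decompose $F$ into $k$ antichains via the dual Dilworth theorem, force each to satisfy the LYM equality, and for a single antichain $A$ push its top-level piece $A_M$ down one level and invoke strict normality to contradict BLYM. The paper's write-up is terser (it does not spell out why the pushed-down family is still an antichain or why the top piece is a proper subset of its level), but the argument is the same.
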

\begin{proof}
By the dual Dilworth theorem, $F$ can be decomposed into $k$ pairwise disjoint antichains. Then equality (\ref{eq-LYMeq}) can be written as the sum of $k$-BLYM inequalities, which in fact are equalities. Let $A$ be one of the antichains in the decomposition of $F$, thus $\sum_ {a\in A}\frac 1{N_{r(a)}}=1$. Suppose now that  $A$  contains elements from different levels and let  $B\subset \P_i$  be the elements of the highest level present in $A$. Replacing $B$  by $\Gamma^-(B)$ in $A$  we get a new Sperner system $A^\prime$. However, in view of strict normality, we have now~ $\sum_ {a\in A^\prime}\frac 1{N_{r(a)}}>1,$ a contradiction with  Theorem \ref{th:kleitman}.
\end{proof}

\medskip\noindent The  strict normalized matching property  seems to be a rather strong prerequisite. How can one find such posets? The following notion (introduced  in \cite{E1},\cite{E2}) leads to a subclass of  regular posets that satisfy this property.
\begin{definition}
A poset $\P$ is called {\bf level-connected} if for each $i$ the bipartite graph $G_{i,i+1} (\P)$ is connected.
\end{definition}
\noindent The following fact is a special case of a more general statement in \cite{E1}.
\begin{lemma} \label{th:strict-normal}
A poset  is strictly normal if it is regular and  level-connected.
\end{lemma}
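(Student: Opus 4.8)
The plan is to argue directly from the definition of the strict normalized matching property by a two-way edge count in the bipartite graph $G_{i-1,i}(\P)$ of cover relations between two consecutive levels, using regularity to pin down the edge counts and level-connectedness to exclude the equality case. Fix $i\in\{1,\dots,r(\P)\}$ and a nonempty proper subset $A\subsetneq\P_i$, and set $B:=\Gamma^-_{i-1}(A)\subseteq\P_{i-1}$; I write $d^-_i,d^+_{i-1}$ for the (rank-dependent) lower and upper degrees. Since a regular poset is normal and hence graded, every rank-$i$ element ($i\ge1$) has a lower cover and every rank-$(i-1)$ element ($i-1\le r(\P)-1$) has an upper cover, so $d^-_i\ge1$ and $d^+_{i-1}\ge1$ and every quotient below is defined. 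Counting the edges of $G_{i-1,i}$ once by summing lower degrees over $\P_i$ and once by summing upper degrees over $\P_{i-1}$ gives $N_id^-_i=N_{i-1}d^+_{i-1}$.

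Next I would count the edges touching $A$ and those touching $B$. Each $a\in A$ contributes exactly $d^-_i$ edges, all landing in $B$ by the definition of $B$ and none inside $A$ (the graph is bipartite), so there are $|A|d^-_i$ edges between $A$ and $B$; similarly there are $|B|d^+_{i-1}$ edges incident to $B$ in total. Hence $|A|d^-_i\le|B|d^+_{i-1}$, which after division by $N_id^-_i=N_{i-1}d^+_{i-1}$ is precisely $|A|/N_i\le|\Gamma^-(A)|/N_{i-1}$ --- this recovers normality --- and equality holds if and only if no edge runs from $B$ to $\P_i\setminus A$, i.e.\ $\Gamma^+_i(B)\subseteq A$.

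To finish, I would rule out equality when $A$ is proper. Assume $|A|/N_i=|\Gamma^-(A)|/N_{i-1}$, so $\Gamma^+_i(B)\subseteq A$. On the other hand every $a\in A$ has a lower cover lying in $B$, so $a\in\Gamma^+_i(B)$; thus $\Gamma^+_i(B)=A$ while $\Gamma^-_{i-1}(A)=B$, which means that the vertex set $A\cup B$ is a union of connected components of $G_{i-1,i}$: no edge joins $A\cup B$ to $(\P_i\setminus A)\cup(\P_{i-1}\setminus B)$. Since $A\neq\emptyset$ forces $A\cup B\neq\emptyset$ and $G_{i-1,i}$ is connected by level-connectedness, we get $A=\P_i$, contradicting $A\subsetneq\P_i$. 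Therefore the inequality is strict for every nonempty proper $A\subsetneq\P_i$, i.e.\ $\P$ is strictly normal.

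The genuinely routine part is the pair of edge counts; the only step needing attention is the equality analysis, where one must combine the inclusion $\Gamma^+_i(B)\subseteq A$ coming from equality with the reverse inclusion coming from gradedness (together with $\Gamma^-_{i-1}(A)=B$ by definition) to conclude that $A\cup B$ is a union of components --- after that, connectedness ends the argument in one line. A minor caveat is that the degenerate case $A=\emptyset$ is (as always) excluded from the strict matching condition, since there both sides are $0$.
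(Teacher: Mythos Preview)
Your argument is correct. The double edge count $N_i d^-_i = N_{i-1} d^+_{i-1}$ together with $|A|\,d^-_i \le |B|\,d^+_{i-1}$ is the standard way to get normality from regularity, and your equality analysis is clean: combining $\Gamma^+_i(B)\subseteq A$ (from equality) with $A\subseteq\Gamma^+_i(B)$ (from gradedness) does force $A\cup B$ to be a union of components of $G_{i-1,i}$, whereupon level-connectedness kills the proper case.

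As for comparison with the paper: there is nothing to compare against, because the paper does not prove this lemma. It merely records it as ``a special case of a more general statement in \cite{E1}'' (Engel, 1983) and moves on. Your write-up therefore supplies exactly what the paper omits --- a short self-contained proof --- and is in fact the natural direct argument one would expect for the regular case. The only cosmetic point is that your closing caveat about $A=\emptyset$ is already implicit in the paper's definition of strict normality (which quantifies over proper subsets, and the inequality is an equality $0=0$ for $A=\emptyset$), so you could drop it without loss.
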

\noindent Thus, we have the following:
\begin{corollary} \label{th:regular-lev}
Let $\P$ be a regular level-connected poset. Then it  satisfies the  strict $k$-LYM inequality.
\end{corollary}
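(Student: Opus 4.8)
The plan is to obtain Corollary~\ref{th:regular-lev} as an immediate consequence of the two results just established, with essentially no extra work. First I would invoke Lemma~\ref{th:strict-normal}: since $\P$ is assumed regular and level-connected, that lemma says $\P$ is strictly normal. It is worth recording explicitly that regularity already forces $\P$ to be graded (all minimal elements of rank $0$, all maximal elements of rank $r(\P)$) and normal, so the hypotheses of Lemma~\ref{th:strict-normal} are genuinely met and nothing degenerate happens at the top or bottom level. Then I would feed this into Theorem~\ref{th:strict}, which asserts that every strictly normal poset satisfies the strict $k$-LYM inequality, i.e. any $k$-Sperner system $F\subset\P$ attaining $\sum_{a\in F}1/N_{r(a)}=k$ is homogeneous. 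Applied to our $\P$ this is exactly the assertion of the corollary, so the whole proof is the one-line chain ``regular $+$ level-connected $\Rightarrow$ strictly normal $\Rightarrow$ strict $k$-LYM''.

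If one preferred a self-contained argument rather than quoting Theorem~\ref{th:strict}, one could simply replay its proof in this setting: decompose $F$ by the dual Dilworth theorem into $k$ pairwise disjoint antichains, observe that each must satisfy the LYM inequality with equality by Kleitman's Theorem~\ref{th:kleitman}, and then use the strict normalized matching property (now supplied by level-connectedness via Lemma~\ref{th:strict-normal} instead of being assumed outright) to replace the highest level of any non-homogeneous antichain by its lower shadow, which strictly increases that antichain's LYM weight above $1$ and contradicts normality. This, however, only re-derives Theorem~\ref{th:strict}, so citing it directly is the cleaner route.

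Consequently the ``hard part'' here is not mathematical at all: it is merely to make sure the quoted statements are applied with the correct hypotheses, and in particular that ``regular level-connected poset'' is understood to carry gradedness with it so that $N_{r(a)}$ and the shadow operation $\Gamma^-$ behave as expected. There is no genuine obstacle, and the proof reduces to two citations.
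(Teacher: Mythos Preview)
Your proposal is correct and matches the paper's approach exactly: the corollary is stated immediately after Lemma~\ref{th:strict-normal} with the phrase ``Thus, we have the following,'' and is meant to follow at once from combining that lemma with Theorem~\ref{th:strict}. Your optional unpacking via the dual Dilworth decomposition is just a restatement of the proof of Theorem~\ref{th:strict}, so nothing new is needed.
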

\begin{remark}
Using the same approach  as in Lemma \ref{lm:4} (in Subsection \ref{sec:2SP}), we can also show that for  a maximum size $k$-Sperner system $F$ in a normal poset the equality $\sum_{a\in F}\frac 1{N_{r(a)}}=k$ holds. Applying now  Theorem \ref{th:strict} for a strictly normal  poset we get Theorem \ref{th:engel}.
\end{remark}

\section{Two-part problems}\label{sec:twopart}
\medskip\noindent
Let $S=S_1 \uplus S_2$ be a fixed partition of the underlying $n$-element set $S$. A family $\Ha$ of subsets of $S$ is called a {\em $2$-part Sperner family } if
$$
\forall E, F \in \Ha \quad \big ( E \subsetneqq F \Rightarrow \forall i : F
\setminus E \not\subseteq S_i \big ).
$$
Katona \cite{katona} and Kleitman \cite{kleitman} proved independently around 1965 that no 2--part Sperner system in the Boolean lattice can be bigger than a maximum size Sperner family.

Much later, only in 1986, P.L. Erd\H{o}s and Katona  \cite{EK2} proved that the 2--part Sperner theorem in the Boolean lattice is strict,   that is all optimal 2--part Sperner families are homogeneous. Here, homogeneous means that the family is a union  of products of complete levels in $2^{S_1}$ and $2^{S_2}$.

In 1971 Schonheim proved (see \cite{schon}) the 2--part Sperner theorem for the direct product of two divisor lattices: here a subset $A \subset \P_1 \times \P_2$ is called  {\em 2--part Sperner system}  if whenever both components of $(x,y)\in A$ are smaller than the corresponding components of $(u,v)\in A$ then neither $x=u$ nor $y=v$ hold.  Katona in \cite{katona1}  generalized this result for the direct product of {\em symmetric chain} posets.

\subsection{2--part AZ--identity}\label{sec:2AZ}
\smallskip\noindent
In this subsection we derive a 2--part AZ type identity for  direct products of regular posets.

Let $\P$ and $\Q$ be regular posets and let $A\subset \P\times \Q$. For each $y\in \Q$ we define $A(y)=\{a\in \P: (a,y)\in A\}$ and $A^x(y)=\{a\in \P: (a,y)\in A, a\leq x\}$. Furthermore, for a  given $(x,y)\in \P\times \Q$ we define $W_{A(y)}(x)=0$ if  $A^x(y)=\emptyset$ and
 $W_{A(y)}(x)= \left |\Gamma^-(x)\setminus  \Gamma^+ _{r(x)-1} (A^x(y))\right |$ otherwise.
\begin{theorem}
For regular U-posets $\P$ and $\Q$ and a subset $A\subset \P\times \Q$ we have
$$
 \sum_{(x,y)\in \P\times \Q}\frac{W_{A(y)}(x)}{d^-_{r(x)} N^{(1)}_{r(x)}N^{(2)}_{r(y)}}=r(\Q)+1.
$$
By convention, for  $\{x\}=\P_0$ and $(x,y)\in A$ (thus  $W_{A(y)}(x)=0, d^-_{r(x)}=d^-_0=0$, $N^{(1)}_{r(x)}=1$)
 we put~ $\frac{W_{A(y)}(x)}{d^-_0\cdot 1\cdot N^{(2)}_{r(y)}}:=\frac{1}{N^{(2)}_{r(y)}}$).
\end{theorem}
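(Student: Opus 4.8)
The plan is to split the identity along the $\Q$-coordinate and apply the one-part identity of Theorem~\ref{th:AZ1} slice by slice; note that $\P\times\Q$ is itself in general not regular, so Theorem~\ref{th:AZ1} cannot be invoked for it directly, which makes a fibred argument the natural route. The first step is a bookkeeping observation: for a fixed $y\in\Q$, the number $W_{A(y)}(x)$ defined here is precisely the number $W_B(x)$ of Subsection~\ref{sec:AZ} for the subset $B:=A(y)\subseteq\P$, since $B^x=A^x(y)$ and the two defining formulas agree verbatim. Consequently, for every $y\in\Q$ with $A(y)\ne\emptyset$, Theorem~\ref{th:AZ1} applied to the regular U-poset $\P$ yields
$$\sum_{x\in\P}\frac{W_{A(y)}(x)}{d^-_{r(x)}N^{(1)}_{r(x)}}=1,$$
where, exactly as in Theorem~\ref{th:AZ1}, the summand indexed by the bottom element $x$ (where $\P_0=\{x\}$) is read as $1$ when $\P_0\subseteq A(y)$ (that is, $(x,y)\in A$), in which case every other summand vanishes.

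The second step is to divide this slice identity by $N^{(2)}_{r(y)}$ and sum over $y\in\Q$; writing the sum over $\P\times\Q$ as an iterated sum gives
$$\sum_{(x,y)\in\P\times\Q}\frac{W_{A(y)}(x)}{d^-_{r(x)}N^{(1)}_{r(x)}N^{(2)}_{r(y)}}=\sum_{y\in\Q}\frac{1}{N^{(2)}_{r(y)}}.$$
Finally I would evaluate the right-hand side by the trivial level count, grouping the elements of $\Q$ by their rank:
$$\sum_{y\in\Q}\frac{1}{N^{(2)}_{r(y)}}=\sum_{j=0}^{r(\Q)}\frac{N^{(2)}_{j}}{N^{(2)}_{j}}=r(\Q)+1,$$
which is the claimed value. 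The convention recorded in the theorem, which assigns the value $1/N^{(2)}_{r(y)}$ to the term indexed by $(x,y)$ with $\P_0=\{x\}$ and $(x,y)\in A$, is nothing but what the Theorem~\ref{th:AZ1} convention turns into after dividing by $N^{(2)}_{r(y)}$, so the degeneracy $d^-_0=0$ at the bottom level causes no harm.

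The one point that genuinely needs attention, and which I expect to be the main obstacle (the remainder being a routine fibred application of Theorem~\ref{th:AZ1}), is the treatment of empty fibres. Theorem~\ref{th:AZ1} produces the value $1$ only for a nonempty $A(y)$; if $A(y)=\emptyset$, then $W_{A(y)}(x)=0$ for all $x$ and that slice contributes $0$, not $1/N^{(2)}_{r(y)}$. Hence the computation above returns $r(\Q)+1$ precisely when every fibre $A(y)$ is nonempty, equivalently when the projection of $A$ onto $\Q$ is all of $\Q$; I would either add this as a standing hypothesis or, for full generality, replace the right-hand side of the identity by $\sum_{y\in\Q:\,A(y)\ne\emptyset}1/N^{(2)}_{r(y)}$.
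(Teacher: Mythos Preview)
Your approach is exactly the paper's: apply Theorem~\ref{th:AZ1} to each fibre $A(y)\subseteq\P$, divide by $N^{(2)}_{r(y)}$, and sum over $y\in\Q$ (the paper groups the sum by level of $\Q$, you group it afterwards, which is immaterial). Your caveat about empty fibres is well taken and is in fact a point the paper's own proof glosses over---the identity as stated holds only when every $A(y)$ is nonempty, and your proposed correction of the right-hand side is the honest statement in general.
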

\begin{proof}
For any given $y\in \Q_i$ we can apply the AZ--identity (\ref{eq:AZ1}) for the subset $A(y)\subset \P$. Summing up  the corresponding identities for all  $y\in \Q_i$ we get:
$$
\sum_{(x,y)\subset \P\times \Q: y\in \Q_i}  \frac{W_{\A(y)}(x)}{d^-_{r(x)} N^{(1)}_{r(x)}N^{(2)}_{r(y)}}=1.
$$
We do the same for all levels $i=0,1,\ldots,r(\Q)$ in $\Q$ and then sum up all these identities. This gives the result.
\end{proof}
\noindent It it easy to see that the latter implies the following identity for 2-part Sperner systems in a product of two regular U-posets.
\begin{corollary}
Let $\P$ and $\Q$ be regular U-posets. Let also  $N^{(1)}_i:=|\P_i|$, $N^{(2)}_i:= |\Q_i|$, and $r(\Q)\leq r(\P)$. If $A\subset \P\times \Q$ is a 2--part Sperner system,  then
\begin{equation}\label{eq:2-partAZ}
\sum_{(a,b)\in A}\frac 1{N^{(1)}_{r(a)}N^{(2)}_{r(b)}}+ \sum_{(x,y)\notin A} \frac{W_{A(y)}(x)}{d^-_{r(x)}N^{(1)}_{r(x)}N^{(2)}_{r(y)}}=r(\Q)+1.
\end{equation}
\qed
\end{corollary}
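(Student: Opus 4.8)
The plan is to obtain this identity by a direct partition of the sum in the 2--part AZ--identity proved above. That identity reads
\[
\sum_{(x,y)\in\P\times\Q}\frac{W_{A(y)}(x)}{d^-_{r(x)}N^{(1)}_{r(x)}N^{(2)}_{r(y)}}=r(\Q)+1,
\]
so it suffices to evaluate the terms indexed by the pairs $(a,b)\in A$ and show that each of them equals $1/\bigl(N^{(1)}_{r(a)}N^{(2)}_{r(b)}\bigr)$; the terms with $(x,y)\notin A$ then reproduce verbatim the second sum of the claimed identity, and the result follows by moving the $A$--terms to the left-hand side.

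First I would translate the hypothesis that $A$ is a 2--part Sperner system into a statement about its slices $A(b)=\{a\in\P:(a,b)\in A\}$. If $a'<a$ with both $(a',b)$ and $(a,b)$ in $A$, then $(a',b)\subsetneq(a,b)$ and these two pairs differ only in their $\P$--coordinate, which is exactly the forbidden ``$F\setminus E\subseteq S_1$'' configuration. Hence every slice $A(b)$ is an antichain in $\P$, and in particular, for each $(a,b)\in A$ we have $A^a(b)=\{a'\in A(b):a'\le a\}=\{a\}$.

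Next I would compute $W_{A(b)}(a)$ for $(a,b)\in A$. Since $A^a(b)=\{a\}\neq\emptyset$, the definition gives $W_{A(b)}(a)=\bigl|\Gamma^-(a)\setminus\Gamma^+_{r(a)-1}(\{a\})\bigr|$; but $\Gamma^+_{r(a)-1}(a)$ is the set of rank-$(r(a)-1)$ elements lying above $a$, and there are none, so this set is empty and $W_{A(b)}(a)=|\Gamma^-(a)|=d^-_{r(a)}$. Thus the $(a,b)$--term equals $1/\bigl(N^{(1)}_{r(a)}N^{(2)}_{r(b)}\bigr)$, as wanted. The degenerate rank-$0$ contributions are handled by the same convention as in the theorem above: when $a\in\P_0$ and $(a,b)\in A$ the $(a,b)$--term is set to $1/N^{(2)}_{r(b)}=1/\bigl(N^{(1)}_0 N^{(2)}_{r(b)}\bigr)$, which again matches the general formula, whereas when $a\in\P_0$ and $(a,b)\notin A$ one has $A^a(b)=\emptyset$, so $W_{A(b)}(a)=0$ and that term simply drops out. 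Substituting into the displayed identity finishes the proof.

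The only step requiring any thought is the translation in the second paragraph --- reading off from the definition of a 2--part Sperner family precisely the structural fact that is needed (each $\P$--slice is an antichain, so $A^a(b)=\{a\}$) and pairing it with the definition of $W_{A(y)}(x)$; everything afterwards is routine bookkeeping. I do not expect the standing hypothesis $r(\Q)\le r(\P)$ to play any role in this particular deduction.
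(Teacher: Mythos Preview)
Your proof is correct and is exactly the argument the paper has in mind: the paper gives no explicit proof (the corollary ends with a bare \qed\ after ``It is easy to see that the latter implies\ldots''), but the intended reasoning is the direct analogue of Corollary~\ref{th:AZ1A}, namely that for an antichain one has $W_A(x)=d^-_{r(x)}$ on $A$, and your slice observation $A^a(b)=\{a\}$ reproduces precisely this for each fixed $b$. Your remark that the hypothesis $r(\Q)\le r(\P)$ is immaterial for this particular deduction is also correct.
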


\subsection{A strict 2--part Sperner theorem}\label{sec:2SP}
\smallskip\noindent
In this subsection we generalize the strict 2--part Sperner theorem (see \cite{EK2}) for direct products of two  strictly normal posets. The proof follows closely the proof of the original strict 2--part Sperner theorem  in \cite{AE}.
\begin{theorem}\label{th:strict2}
Let   $\P$ and $\Q$ be   strictly  normal posets and let  $F\subset \P\times \Q$ be a maximum  size 2--part Sperner system. Then $F$ is a  homogeneous system.
\end{theorem}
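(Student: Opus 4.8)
We may assume $r(\Q)\le r(\P)$ (otherwise interchange $\P$ and $\Q$) and set $n:=r(\Q)$. The plan is to reduce the two-part problem to single-poset Sperner theory in four moves: a two-part LYM inequality obtained by slicing $F$ along the maximal chains of $\Q$; then maximality upgrades it to an equality; then the \emph{strict} LYM property of $\P$ pins down every $\P$-fibre of $F$; and finally the \emph{strict} Sperner property of $\Q$, together with maximality, yields homogeneity. The underlying observation is that $F$ is two-part Sperner precisely when, for every $b\in\Q$, the fibre $F(b):=\{a\in\P:(a,b)\in F\}$ is an antichain in $\P$, and for every $a\in\P$ the set $\{b\in\Q:(a,b)\in F\}$ is an antichain in $\Q$. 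For the inequality, recall that $\Q$, being normal, has a regular chain covering $g$ (Theorem~\ref{th:kleitman}). Fix a maximal chain $C=(c_0\prec\cdots\prec c_n)$ of $\Q$ and put $F_C=F\cap(\P\times C)$. The fibres $F(c_0),\dots,F(c_n)$ are antichains in $\P$ and are pairwise disjoint (if $a\in F(c_s)\cap F(c_t)$ with $s\ne t$, then $(a,c_s),(a,c_t)$ violate the two-part condition), so the projection $\pi_\P(F_C)=\bigcup_t F(c_t)$ has $|\pi_\P(F_C)|=|F_C|$ and contains no chain of $\P$ of length $n+2$, i.e.\ it is an $(n{+}1)$-Sperner system. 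Applying the $(n{+}1)$-BLYM inequality for the normal poset $\P$ to $\pi_\P(F_C)$, multiplying by $g(C)$, summing over all maximal chains $C$, and using $\sum_C g(C)=1$ and $\sum_{C\ni b}g(C)=1/N^{(2)}_{r(b)}$, we obtain
$$\sum_{(a,b)\in F}\frac{1}{N^{(1)}_{r(a)}\,N^{(2)}_{r(b)}}\ \le\ n+1.$$

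Since $F$ has maximum size, this is an equality; this is the two-part analogue of the fact (recalled, with the same up-pushing proof, in the Remark after Corollary~\ref{th:regular-lev}) that a maximum-size $k$-Sperner system in a normal poset meets the $k$-LYM identity, and can be quoted as Lemma~\ref{lm:4}. Rewriting the equality through the covering, $\sum_C g(C)\sum_{a\in\pi_\P(F_C)}1/N^{(1)}_{r(a)}=n+1$ while every inner sum is at most $n+1$, so $\sum_{a\in\pi_\P(F_C)}1/N^{(1)}_{r(a)}=n+1$ whenever $g(C)>0$. Now $\pi_\P(F_C)$ is the disjoint union of the $n+1$ antichains $F(c_t)$, each satisfying $\sum_{a\in F(c_t)}1/N^{(1)}_{r(a)}\le1$ in the normal poset $\P$; hence each of these sums equals $1$, and the strict LYM inequality for the strictly normal poset $\P$ (Theorem~\ref{th:strict} with $k=1$) forces every $F(c_t)$ to be a complete level of $\P$. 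Every $b\in\Q$ lies on some maximal chain $C$ with $g(C)>0$ (because $\sum_{C\ni b}g(C)>0$), so there is a map $\psi\colon\Q\to\{0,\dots,r(\P)\}$ with $F(b)=\P_{\psi(b)}$ for all $b$.

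The two-part condition forces $F(b)\cap F(b')=\emptyset$ whenever $b<b'$, so $\psi(b)\ne\psi(b')$ for comparable $b\ne b'$; thus each colour class $A_i:=\psi^{-1}(i)$ is an antichain of $\Q$, the $A_i$ partition $\Q$, and $|F|=\sum_i N^{(1)}_i\,|A_i|$. Because $\Q$ is normal, the union of any $k$ of the $A_i$ is a $k$-Sperner system, hence has size at most the sum of the $k$ largest Whitney numbers of $\Q$; an Abel summation then bounds $\sum_i N^{(1)}_i|A_i|$ above by the value it takes when the $A_i$ are complete levels of $\Q$, matched to the levels of $\P$ in decreasing order of Whitney number. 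Maximality of $F$ forces equality, so each of the intermediate $k$-Sperner systems has maximum size, and the strict $k$-Sperner property of the strictly normal poset $\Q$ (Theorem~\ref{th:engel}) forces every nonempty $A_i$ to be a complete level of $\Q$. Consequently $\psi$ is constant on each level, say $\psi|_{\Q_j}\equiv\phi(j)$ with $\phi$ injective, and $F=\bigcup_{j=0}^{n}\P_{\phi(j)}\times\Q_j$, a homogeneous system.

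The first two moves are routine once the fibre structure is spotted; the delicate one is the last, namely passing from ``$\psi$ is a proper colouring maximising $\sum_i N^{(1)}_i|A_i|$'' to ``each colour class is a complete level of $\Q$''. This is precisely where the strict normalized matching of $\Q$ is indispensable, and the rearrangement estimate has to be organised so that its equality case really forces the nested unions $A_{i_1}\cup\cdots\cup A_{i_k}$ to be unions of complete levels, from which the individual classes are recovered as successive differences. The only external input is Lemma~\ref{lm:4}, established just as its one-poset counterpart.
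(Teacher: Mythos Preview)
Your argument and the paper's coincide through the key reduction: both show, via the two-part LYM inequality together with Lemma~\ref{lm:4}, that every fibre $F(b)$ is a complete level of $\P$. The paper gets there a shade more directly---it applies the ordinary BLYM inequality to each antichain $F(b)$, sums over $b$, and then invokes Lemma~\ref{lm:4} to upgrade to equality---whereas you route through the $(n{+}1)$-BLYM inequality on the projections $\pi_\P(F_C)$; the two derivations are equivalent here, since $\pi_\P(F_C)$ is already the disjoint union of the $n{+}1$ antichains $F(c_t)$, so your detour gains nothing.

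The genuine divergence is in the endgame. The paper disposes of the case $r(\P)=r(\Q)$ by simply exchanging the roles of $\P$ and $\Q$ (the symmetric argument then shows each $A_i$ is a full level of $\Q$), and for $r(\P)>r(\Q)$ it defers entirely to the numerical argument of \cite{AE}. Your Abel-summation route is more uniform and self-contained, which is attractive, but the inference ``maximality forces each of the intermediate $k$-Sperner systems to have maximum size'' is not justified when the Whitney numbers of $\P$ have repeats: equality in
\[
\sum_j\bigl(N^{(1)}_{i_j}-N^{(1)}_{i_{j+1}}\bigr)S_j
=\sum_j\bigl(N^{(1)}_{i_j}-N^{(1)}_{i_{j+1}}\bigr)T_j
\]
pins down $S_j=T_j$ only where the coefficient is positive, so inside a tie block the partial sums are unconstrained and the ``successive differences'' recovery stalls. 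What is actually needed to finish is the auxiliary fact that, in a strictly normal poset, a union of $m$ full levels admits only one partition into $m$ antichains---the levels themselves. For $m=2$ this follows from iterated strict normality (the two strict shadow inequalities for the complementary pieces sum to more than $1$, contradicting disjointness), and that already handles tie blocks of length two; but for longer blocks the argument has to be made, and your sketch does not supply it. The strategy is sound and buys a single treatment of both rank cases, yet the step you yourself flag as delicate is genuinely incomplete as written.
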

In fact, the theorem allows us to characterize all maximum size 2--part Sperner systems in the described subclasses of normal posets. To do this we define below the notion of the {\em well-paired} homogeneous system.

Let $F=\bigcup\limits_{(i,j)\in I}\P_i\times Q_j$. Then clearly $F$ is a 2--part Sperner system if and only if $I$ is a  {\em transversal}, that is every two members of $I$ do not have a component in common. Clearly $|I|\leq \min\{r(\P),r(\Q)\}+1$ and in the case of equality, $I$ is called a {\em full transversal} (otherwise it is called {\em partial}). Note that if $F$ is a maximum size homogeneous system,  then  $I$  is a full transversal, since every partial
 transversal can be extended to a full one. Now clearly  $|F|=\max\limits_{I}\sum\limits_{(i,j)\in I} |P_i| |Q_j|$ where $I\subset \{0,\ldots,r(\P)\}\times \{0,\dots,r(\Q)\}$ is a full transversal.
Thus, an optimal $F$ consists of union of products of  full levels in each poset, such that  $|I|$ largest levels   in  $\P$ are paired with corresponding  $|I|$ largest levels in $\Q$. We call such a 2--part Sperner system {\em well-paired}. In fact, we have seen that the following holds.
\begin{lemma} \label{lm:3}
Let $\P,\Q$ be  ranked posets and let $F\subset \P\times \Q$ be a maximum size homogeneous 2--part Sperner system. Then $F$ is a well paired  system.
\end{lemma}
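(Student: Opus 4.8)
\noindent\emph{Proof plan.} The idea is to reduce the statement to a rearrangement inequality, building on the observations recorded just before the lemma. A homogeneous $F\subseteq\P\times\Q$ is $F=\bigcup_{(i,j)\in I}\P_i\times\Q_j$ for a unique set $I$ of pairs of level indices; $F$ is a $2$-part Sperner system exactly when $I$ is a transversal; $|F|=\sum_{(i,j)\in I}|\P_i|\,|\Q_j|$; and, since every partial transversal extends to a full one, a maximum size homogeneous $2$-part Sperner system corresponds to a \emph{full} transversal (adjoining to a partial transversal a pair of still-unused level indices keeps it a transversal and changes $|F|$ by $|\P_{i_0}|\,|\Q_{j_0}|\ge 0$, which is strictly positive because every level of a graded poset is nonempty, so no partial transversal can be optimal). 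Hence it suffices to show that the maximum of $\sum_{(i,j)\in I}|\P_i|\,|\Q_j|$ over full transversals $I$ is attained precisely at the well-paired configuration.

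I would assume without loss of generality that $r(\Q)\le r(\P)$ and set $m=r(\Q)$. A full transversal is then the graph $\{(\pi(j),j):0\le j\le m\}$ of an injection $\pi\colon\{0,\dots,m\}\to\{0,\dots,r(\P)\}$, so one must maximise $\sum_{j=0}^{m}|\P_{\pi(j)}|\,|\Q_j|$ over all such $\pi$. First fix the image $S=\pi(\{0,\dots,m\})$: by the rearrangement inequality the sum is maximised exactly when $\pi$ pairs the numbers $\{|\P_i|:i\in S\}$ with $|\Q_0|,\dots,|\Q_m|$ in the same monotone order. Next vary $S$: if an optimal $S$ omitted a level $k$ with $|\P_k|>|\P_\ell|$ for some $\ell\in S$, then replacing $\ell$ by $k$ and re-sorting would not decrease the sum and, since every Whitney number $|\Q_j|$ is positive, would strictly increase it, a contradiction. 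Therefore an optimal $S$ consists of $m+1$ levels of $\P$ of largest size, matched in order of size with the $m+1$ levels of $\Q$; this is exactly a well-paired system, and consequently every maximum size homogeneous $2$-part Sperner system is well-paired.

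No step here is genuinely hard: everything except the rearrangement/exchange argument of the second paragraph has already been set up in the text, and that argument is textbook. The only mild subtlety is that when some Whitney numbers coincide the phrase ``the $m+1$ largest levels'' is not unique; this causes no trouble, as all the resulting optimal choices are well-paired in the intended sense. (If one wants the lemma for an arbitrary ranked poset rather than a graded one, simply discard the empty levels, each of which contributes $0$ to every term, and run the same argument on the graded poset that remains.)
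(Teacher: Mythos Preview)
Your proposal is correct and follows essentially the same route as the paper: the paper's ``proof'' is the paragraph immediately preceding the lemma (it ends with ``In fact, we have seen that the following holds''), which sets up the transversal correspondence, notes that a maximum must come from a full transversal, and then simply asserts the well-pairing conclusion. You have merely made explicit the rearrangement/exchange step that the paper leaves to the reader, and your remarks on ties among Whitney numbers and on possibly empty levels are sensible housekeeping.
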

\noindent We can rephrase now Theorem \ref{th:strict2} as follows:
\begin{description}
\item[$(\dag)$] \quad {\em All maximum size 2--part Sperner systems in the described classes of product posets are well--paired.}
\end{description}

\noindent We prove  Theorem~\ref{th:strict2}  through a series of lemmas. Let $\C:=\C_1 \times \C_2$  where $\C_1:=\C(\P), \C_2:=\C(\Q)$ and let  $n_1:=r(\P)$, $n_2:=r(\Q)$.
\begin{lemma} \label{lm:2}
Let $\P,\Q$ be normal posets and let  $F\subset \P\times \Q$ be a 2--part Sperner system. Let also  $(C_1,C_2)\in \C(\P)\times  \C(\Q)$. If $A:=F\cap(C_1\times C_2)$ then
\begin{equation}\label{eq:upper}
|A|\leq \min \{r(\P),r(\Q)\}+1.
\end{equation}
\end{lemma}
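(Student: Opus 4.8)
The plan is to analyze the structure of $A = F \cap (C_1 \times C_2)$ as a subset of the grid poset $C_1 \times C_2$, where $C_1$ is a maximal chain in $\P$ (of length $n_1+1$) and $C_2$ a maximal chain in $\Q$ (of length $n_2+1$). Since $\P$ and $\Q$ are normal, hence graded, both $C_1$ and $C_2$ are genuine maximal chains running through all levels, so $C_1 \times C_2$ is isomorphic (as a poset) to the product of two finite chains, i.e. a $(n_1+1)\times(n_2+1)$ grid. The 2-part Sperner condition on $F$ restricts to $A$: if $(x,y), (u,v) \in A$ with $x \le u$ and $y \le v$ and $(x,y)\neq(u,v)$, then we cannot have $x=u$ and we cannot have $y=v$; so any two distinct comparable elements of $A$ must differ in \emph{both} coordinates. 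Equivalently, writing the elements of $A$ via their level-coordinates $(r(x), r(y)) \in \{0,\dots,n_1\}\times\{0,\dots,n_2\}$, no two of these coordinate pairs may share a first coordinate or share a second coordinate — this is exactly the transversal condition, and inside a grid it means the projections of $A$ to the two coordinate axes are both injective.

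First I would make precise that the map $A \to \{0,\dots,n_1\}\times\{0,\dots,n_2\}$ sending $(x,y)$ to $(r(x),r(y))$ is well-defined and, crucially, that distinct elements of $A$ map to distinct pairs with distinct first coordinates and distinct second coordinates. The injectivity-into-distinct-coordinates claim follows because within a single chain $C_1$, two elements of the same rank must coincide; so if $(x,y),(u,v)\in A$ had $r(x)=r(u)$ then $x=u$, and then $y,v$ are comparable in $C_2$ — say $y\le v$ — giving a 2-part Sperner violation unless $(x,y)=(u,v)$. Symmetrically for the second coordinate. Therefore the number of elements of $A$ equals the number of distinct first coordinates used, which is at most $n_1+1$, and also equals the number of distinct second coordinates used, which is at most $n_2+1$. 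Hence $|A| \le \min\{n_1+1, n_2+1\} = \min\{r(\P),r(\Q)\}+1$, which is (\ref{eq:upper}).

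I do not expect a serious obstacle here; the only mild subtlety is handling the case of equal ranks correctly (two elements of a chain of equal rank are equal, which needs the chain to be maximal and the poset graded so that rank restricted to the chain is a bijection onto $\{0,\dots,n_i\}$) and being careful that the 2-part Sperner hypothesis is being applied with the right direction of comparability. One should also note that $A$ being a subset of $F$ inherits the 2-part Sperner property verbatim, since the defining implication is universally quantified over pairs in the family. If desired, one can phrase the whole argument as: project $A$ onto $C_1$; the projection is injective by the above, and its image is an antichain-free... in fact the image can be arbitrary, but its size is bounded by $|C_1| = n_1+1$; symmetrically by $n_2+1$; done. The statement as phrased uses $\min\{r(\P),r(\Q)\}+1$, matching $\min\{|C_1|,|C_2|\}$, so this completes the proof of Lemma~\ref{lm:2}.
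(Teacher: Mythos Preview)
Your proof is correct and follows exactly the same approach as the paper: map $A$ to the index set $I=\{(i,j): (a_i,b_j)\in A\}$ of rank-coordinates and observe that $I$ is a transversal, so $|A|=|I|\le \min\{n_1,n_2\}+1$. The paper states the transversal observation in a single sentence, while you spell out the easy verification in detail; otherwise the arguments are identical.
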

\begin{proof}
Let $C_1$ and $C_2$ consist of elements $a_0<a_1<\ldots < a_{n_1}$ and $b_0< b_1 <\ldots < b_{n_2}$ respectively. Let $I=\{(i,j)\in (n_1+1)\times (n_2+1): (a_i,b_j)\in A\} $. Observe now that this $I$ is a transversal. Hence  the result.
\end{proof}
\medskip\noindent  The following result plays a key role in our proof.
\begin{lemma} \label{lm:4}
Let $\P, \Q$ be normal posets and let  $F\subset \P\times \Q$ be a maximum size 2--part Sperner system. Assume also that $n_2 \le  n_1.$ Then
\begin{equation}
\sum_{(a,b)\in F}\frac1{N^{(1)}_{r(a)}N^{(2)}_{r(b)}}=n_2+1.
\end{equation}
\end{lemma}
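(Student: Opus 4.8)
The plan is to read the left-hand side as the total mass of a point of a suitable polytope of doubly substochastic matrices, and then to use the maximality of $F$ to force that mass onto the boundary. Throughout I identify a transversal $I\subset\{0,\dots,n_1\}\times\{0,\dots,n_2\}$ (in the sense of Section~\ref{sec:twopart}) with its $0/1$ indicator matrix.

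First I would set up coordinates. For $0\le i\le n_1$ and $0\le j\le n_2$ put $v_{ij}:=|F\cap(\P_i\times\Q_j)|/(N^{(1)}_iN^{(2)}_j)$, so that
\[
\sum_{(a,b)\in F}\frac{1}{N^{(1)}_{r(a)}N^{(2)}_{r(b)}}=\sum_{i,j}v_{ij}
\quad\text{and}\quad
|F|=\sum_{i,j}v_{ij}\,N^{(1)}_iN^{(2)}_j .
\]
The structural point is that in a $2$--part Sperner system every fibre $F(y)=\{a\in\P:(a,y)\in F\}$ is an antichain of $\P$, and every fibre $\{b\in\Q:(x,b)\in F\}$ is an antichain of $\Q$. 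Hence for each fixed $j$,
\[
\sum_i v_{ij}=\frac{1}{N^{(2)}_j}\sum_{y\in\Q_j}\Bigl(\sum_{a\in F(y)}\frac{1}{N^{(1)}_{r(a)}}\Bigr)\le\frac{1}{N^{(2)}_j}\cdot N^{(2)}_j=1
\]
by the BLYM inequality for the normal poset $\P$ (Theorem~\ref{th:kleitman}), and symmetrically $\sum_j v_{ij}\le 1$ for each fixed $i$. Thus $v=(v_{ij})$ lies in the polytope $\mathcal M$ of nonnegative $(n_1+1)\times(n_2+1)$ matrices with all row sums and all column sums at most $1$; in particular $\sum_{i,j}v_{ij}\le n_2+1$, which already gives the upper bound. (Alternatively, this follows by averaging the bound of Lemma~\ref{lm:2} against a product of regular chain coverings of $\P$ and $\Q$.)

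Next I would use that $\mathcal M$ is the fractional matching polytope of the complete bipartite graph $K_{n_1+1,\,n_2+1}$, hence integral: its vertices are precisely the $0/1$ partial permutation matrices, i.e.\ the transversals $I$. For the linear functional $c(X):=\sum_{i,j}X_{ij}N^{(1)}_iN^{(2)}_j$ one has $c(I)=\sum_{(i,j)\in I}N^{(1)}_iN^{(2)}_j$; since all Whitney numbers are positive and $n_2\le n_1$, any non-full transversal $I$ extends to a full transversal $I'\supsetneq I$ with $c(I')>c(I)$, so every vertex of $\mathcal M$ at which $c$ attains its maximum is full, and $\max_{\mathcal M}c=\mu$ where $\mu:=\max\{c(I):I\text{ a full transversal}\}$. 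In particular $|F|=c(v)\le\mu$. Conversely, by Lemma~\ref{lm:3} a maximum size homogeneous $2$--part Sperner system is well-paired and so has size exactly $\mu$; as $F$ has maximum size, $|F|\ge\mu$, whence $c(v)=\mu$.

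Finally I would push the mass to the boundary. Writing $v=\sum_k\lambda_k X^{(k)}$ as a convex combination of vertices of $\mathcal M$, the equality $c(v)=\mu=\max_{\mathcal M}c$ forces $c(X^{(k)})=\mu$, hence $X^{(k)}$ to be a \emph{full} transversal, for every $k$ with $\lambda_k>0$. Every such $X^{(k)}$ is a $0/1$ matrix with every column sum equal to $1$, so the same holds for the convex combination $v$; therefore
\[
\sum_{(a,b)\in F}\frac{1}{N^{(1)}_{r(a)}N^{(2)}_{r(b)}}=\sum_{i,j}v_{ij}=\sum_{j=0}^{n_2}\Bigl(\sum_i v_{ij}\Bigr)=n_2+1 .
\]
I expect the delicate part to be the passage from ``each fibre is an antichain'' to ``$v\in\mathcal M$'' — one must be careful that the factorwise BLYM inequalities are being applied to genuine antichains of the factor posets — together with the standard but load-bearing integrality of this rectangular polytope, since it is precisely integrality that upgrades the numerical identity $|F|=\mu$ to the rigid combinatorial statement that no level of $\Q$ loses any fractional mass.
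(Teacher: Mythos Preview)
Your argument is correct. It is, however, a genuinely different route from the one the paper takes.

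The paper proves Lemma~\ref{lm:4} by working directly with regular chain coverings $f_1,f_2$ of $\P,\Q$ (which exist by Kleitman's theorem). It writes $|F|$ as a $g$-weighted sum over pairs $(C_1,C_2)$ of maximal chains, with $g=f_1\otimes f_2$, and bounds this by $\max_{(C_1,C_2)}\sum_{(a,b)\in (C_1\times C_2)\cap F}N^{(1)}_{r(a)}N^{(2)}_{r(b)}$. Maximality of $F$ forces equality throughout, and the paper extracts from this the combinatorial fact (Proposition~\ref{lm:2exact}) that every chain product of positive weight meets $F$ in exactly $n_2+1$ points; a second pass through the regular covering then converts this into the claimed identity.

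You instead aggregate to the rank-profile matrix $v=(v_{ij})$, place it in the rectangular doubly substochastic polytope via factorwise BLYM on the fibres, and invoke the integrality of that polytope (partial permutation matrices as vertices) to write $v$ as a convex combination of transversals; maximality of $|F|=c(v)$ forces every transversal in the support to be full, whence all column sums of $v$ equal $1$ and the identity follows. This is essentially the convex-hull approach of \cite{AE}, \cite{EK3}, \cite{Sa} that the paper explicitly mentions in the Remark preceding its proof but chooses not to use. What you gain is a clean linear-programming picture with almost no bookkeeping; what the paper's chain-covering proof buys is the intermediate Proposition~\ref{lm:2exact}, a pointwise statement about individual chain products that has independent value (and is reused in Proposition~\ref{co:homo}).
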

\noindent
\begin{remark}  The prove of this equality from \cite{AE} for Boolean lattices  used  results from the theory of convex hulls for 2--part Sperner families (\cite{EK3}). Here we could apply  similar arguments, using in addition a result of Sali (\cite{Sa}). For the sake of completeness,  we give here a direct proof of this statement.
\end{remark}
\begin{proof}
Let $f_1,f_2$ be regular chain coverings of $\P$ and $\Q$ respectively.  Define $g(C_1,C_2) =  f_1(C_1)f_2(C_2)$ for all $(C_1,C_2)\in  \C_1\times \C_2$. Let $F\subset \P\times \Q$ be a maximum size 2--part Sperner system. Then in view of regular chain coverings $f_1$ and $f_2$ we have
\begin{eqnarray}
|F|&=&\sum_{(a,b)\in F}1=\sum_{(a,b)\in F}\left (\sum_{\genfrac{}{}{0pt}{}{C_1\in \C_1:}{a\in C_1}}f_1(C_1)N^{(1)}_{r(a)}\right )  \left (\sum_{\genfrac{}{}{0pt}{}{C_2\in \C_2:}{a\in C_2}}f_2(C_2) N^{(2)}_{r(a)}\right ) \nonumber\\
&=&\sum_{(a,b)\in F}~\left (\sum_{\genfrac{}{}{0pt}{}{(C_1,C_2)\in \C:} {(a,b)\in(C_1\times C_2)}} g(C_1,C_2)N^{(1)}_{r(a)}N^{(2)}_{r(b)}   \right )\nonumber \\
&=&\sum_{(C_1,C_2)\in \C} \left ( g(C_1,C_2) \sum_{(a,b)\in (C_1\times C_2)\cap F}
N^{(1)}_{r(a)}N^{(2)}_{r(b)}  \right )\nonumber \\
&\leq&\sum_{(C_1\times C_2)\cap F\neq \emptyset}g(C_1,C_2)\left(\max_{(C_1,C_2)\in
\C} \sum_{(a,b)\in(C_1\times C_2)\cap F} N^{(1)}_{r(a)}N^{(2)}_{r(b)} \right)
\label{eq:one}\\
&\leq & \max_{(C_1,C_2)\in\C}  \sum_{(a,b)\in (C_1\times C_2)\cap F} N^{(1)}_{r(a)} N^{(2)}_{r(b)}.\label{eq:two}
\end{eqnarray}

\noindent Before we continue the proof of Lemma~\ref{lm:4} we notice that the last inequality together with Lemma \ref{lm:3} implies the following:
\begin{prop}\label{co:homo}
In the product of two normal posets there exist maximum size 2--part Sperner systems  which are  well-paired $($homogeneous$)$ systems.
\end{prop}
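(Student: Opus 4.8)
The plan is to read Proposition~\ref{co:homo} off the inequality chain already available in the proof of Lemma~\ref{lm:4}. The key observation is that the chain of (in)equalities leading to \eqref{eq:two} only used the existence of regular chain coverings $f_1,f_2$ of $\P$ and $\Q$ — which exist because $\P,\Q$ are normal, by Theorem~\ref{th:kleitman} — and never used the maximality of $F$. Hence for \emph{every} 2--part Sperner system $F\subset\P\times\Q$ one has
$$
|F|\ \le\ \max_{(C_1,C_2)\in\C}\ \sum_{(a,b)\in(C_1\times C_2)\cap F} N^{(1)}_{r(a)}N^{(2)}_{r(b)}.
$$

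The next step is to replace the right-hand side by a quantity independent of $F$. Given maximal chains $C_1=\{a_0<\dots<a_{n_1}\}$ in $\P$ and $C_2=\{b_0<\dots<b_{n_2}\}$ in $\Q$, Lemma~\ref{lm:2} (more precisely its proof) shows that $I:=\{(i,j):(a_i,b_j)\in F\}$ is a transversal of the grid $\{0,\dots,n_1\}\times\{0,\dots,n_2\}$. Therefore
$$
\sum_{(a,b)\in(C_1\times C_2)\cap F} N^{(1)}_{r(a)}N^{(2)}_{r(b)}\ =\ \sum_{(i,j)\in I}N^{(1)}_i N^{(2)}_j\ \le\ M,\qquad M:=\max_{J\ \mathrm{transversal}}\ \sum_{(i,j)\in J}N^{(1)}_i N^{(2)}_j .
$$
Since each $N^{(1)}_i N^{(2)}_j\ge 1$, the maximum $M$ is attained at a \emph{full} transversal $J^\ast$, and by the discussion preceding Lemma~\ref{lm:3} the homogeneous family $F^\ast:=\bigcup_{(i,j)\in J^\ast}\P_i\times\Q_j$ is a well-paired 2--part Sperner system with $|F^\ast|=M$.

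Finally I close the loop. Combining the two displays gives $|F|\le M=|F^\ast|$ for every 2--part Sperner system $F$; in particular this holds for a maximum size one. On the other hand $F^\ast$ is itself a 2--part Sperner system, so $|F^\ast|\le|F|$ whenever $F$ is of maximum size. Hence $|F|=|F^\ast|=M$, and $F^\ast$ is a maximum size 2--part Sperner system that is homogeneous and well-paired, which is exactly Proposition~\ref{co:homo} (and it pins down the optimal value as $M$, the maximum of $\sum_{(i,j)\in J}N^{(1)}_i N^{(2)}_j$ over full transversals $J$). I do not expect a genuine obstacle here: the whole argument is a bookkeeping step, and the only things that need care are verifying that the chain \eqref{eq:one}--\eqref{eq:two} is valid for an arbitrary (not necessarily maximum) 2--part Sperner system, and that the transversal optimum $M$ is realized by a full transversal and hence — via Lemma~\ref{lm:3} — by the well-paired homogeneous family $F^\ast$.
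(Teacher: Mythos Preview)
Your argument is correct and is essentially the same as the paper's: the paper derives the chain \eqref{eq:one}--\eqref{eq:two} for an arbitrary 2--part Sperner system (using only normality via regular chain coverings), observes that the right-hand side is bounded by the transversal maximum, and then invokes Lemma~\ref{lm:3} to conclude that a well-paired homogeneous family attains this bound. You have simply spelled out the bookkeeping that the paper compresses into the single sentence ``the last inequality together with Lemma~\ref{lm:3} implies the following''.
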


\noindent Now, since  $F$ has maximum size we must have  equalities in (\ref{eq:one}) and (\ref{eq:two}). From this we can infer an important consequence:
\begin{prop}\label{lm:2exact}
For any maximum size 2--part Sperner system  $F\subset \P\times \Q$ we have $|(C_1\times C_2)\cap F|=n_2+1$ for all $(C_1,C_2)\in \C_1\times \C_2$ with  $g(C_1,C_2)>0$.
\end{prop}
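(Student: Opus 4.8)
The plan is to read off Proposition~\ref{lm:2exact} from the fact that the whole chain (\ref{eq:one})--(\ref{eq:two}) collapses to equalities when $F$ is of maximum size. Set $M:=\max_{(C_1,C_2)\in\C}\sum_{(a,b)\in(C_1\times C_2)\cap F}N^{(1)}_{r(a)}N^{(2)}_{r(b)}$. First I would record two opposite bounds. The displayed chain already gives $|F|\le M$. Conversely, by Proposition~\ref{co:homo} together with Lemma~\ref{lm:3}, $|F|$ equals the maximum of $\sum_{(i,j)\in I}N^{(1)}_iN^{(2)}_j$ over all \emph{full} transversals $I$; and for any single pair $(C_1,C_2)$, Lemma~\ref{lm:2} identifies $(C_1\times C_2)\cap F$ with a (possibly partial) transversal for which $\sum_{(a,b)\in(C_1\times C_2)\cap F}N^{(1)}_{r(a)}N^{(2)}_{r(b)}=\sum_{(i,j)\in I}N^{(1)}_iN^{(2)}_j$. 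Completing that transversal to a full one only enlarges the sum, since all Whitney numbers are positive ($\P,\Q$ being graded); hence $M\le|F|$. So $M=|F|$, and every inequality in (\ref{eq:one})--(\ref{eq:two}) is tight.

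Next I would exploit the two equalities. Since $g=f_1f_2$ is a product of regular chain coverings, $\sum_{(C_1,C_2)\in\C}g(C_1,C_2)=1$; thus equality in (\ref{eq:two}) forces $\sum_{(C_1\times C_2)\cap F\ne\emptyset}g(C_1,C_2)=1$, i.e. $g(C_1,C_2)=0$ whenever $(C_1\times C_2)\cap F=\emptyset$. Equality in (\ref{eq:one}), rewritten as $\sum_{(C_1\times C_2)\cap F\ne\emptyset}g(C_1,C_2)\bigl(M-\sum_{(a,b)\in(C_1\times C_2)\cap F}N^{(1)}_{r(a)}N^{(2)}_{r(b)}\bigr)=0$ with every summand nonnegative, then forces $\sum_{(a,b)\in(C_1\times C_2)\cap F}N^{(1)}_{r(a)}N^{(2)}_{r(b)}=M=|F|$ for every $(C_1,C_2)$ with $g(C_1,C_2)>0$ (such a pair necessarily has $(C_1\times C_2)\cap F\ne\emptyset$, by the previous sentence).

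Finally I would convert this weighted equality into the claimed count. Fix $(C_1,C_2)$ with $g(C_1,C_2)>0$ and let $I$ be the associated transversal, so $\sum_{(i,j)\in I}N^{(1)}_iN^{(2)}_j=|F|$. Were $I$ partial, extending it to a full transversal $I'\supsetneq I$ would yield a homogeneous $2$-part Sperner family $\bigcup_{(i,j)\in I'}\P_i\times\Q_j$ of size $\sum_{(i,j)\in I'}N^{(1)}_iN^{(2)}_j>\sum_{(i,j)\in I}N^{(1)}_iN^{(2)}_j=|F|$, contradicting maximality of $F$. Hence $I$ is full, so (using $n_2\le n_1$) $|I|=n_2+1$, i.e. $|(C_1\times C_2)\cap F|=n_2+1$, as required.

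I expect the only genuinely delicate point to be the bookkeeping that pins $M$ to $|F|$: the inequality chain by itself gives merely $|F|\le M$, and one has to bring in the homogeneous optimum from Proposition~\ref{co:homo}/Lemma~\ref{lm:3} for the reverse bound before the two equality cases become usable. The supporting observation — that a partial transversal completes to a full one and every completion strictly increases the weight — is the other small ingredient; both are routine once stated.
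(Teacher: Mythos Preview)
Your proof is correct and follows essentially the same route as the paper's own argument: you squeeze $|F|\le M(F)\le |F^{\text{hom,opt}}|=|F|$ (the second inequality via completing the transversal coming from $(C_1\times C_2)\cap F$ to a full one), collapse (\ref{eq:one})--(\ref{eq:two}) to equalities, and then read off both $\sum_{(C_1\times C_2)\cap F\ne\emptyset}g=1$ and the fullness of each relevant transversal. The only difference is presentational: you make explicit the reverse bound $M(F)\le|F|$, which the paper leaves implicit in its derivation of Proposition~\ref{co:homo} and its one-line assertion that ``since $F$ has maximum size we must have equalities.''
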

\noindent {\em Proof.}
Indeed,  the equality in (\ref{eq:one}) implies that for each pair $(C_1,C_2)$
with $(C_1\times C_2)\cap F\neq\emptyset$ we  have
$$
\sum_{(a,b)\in (C_1\times C_2)\cap F} N^{(1)}_{r(a)}N^{(2)}_{r(b)}=|F^*|,
$$
where $F^*$ is an optimal homogeneous  2--part Sperner system. Since, in view of
Lemma \ref{lm:3}, $F^*$ is full, it implies  that  $|\{(a,b)\in (C_1\times C_2) \}|=n_2+1$. On the other hand, an equality in (\ref{eq:two}) implies that
\begin{equation}\label{eq:lm:2exact}
\sum_{(C_1\times C_2)\cap F\neq \emptyset}g(C_1,C_2)=1,
\end{equation}
which means that $(C_1\times C_2)\cap F\neq \emptyset$ for each pair $(C_1,C_2)$  with
$g(C_1,C_2)>0$. This completes the proof of Proposition~\ref{lm:2exact}.
\end{proof}

\smallskip\noindent  Now we are going to finish the proof of Lemma~\ref{lm:4}. We have
\begin{eqnarray}\label{lm:exact}
\sum_{(a,b)\in F}\frac{1}{N^{(1)}_{r(a)}N^{(2)}_{r(b)}} & = & \sum_{(a,b)\in F}\left  (\sum_{\genfrac{}{}{0pt}{}{(C_1,C_2)\in\C:} {(a,b)\in(C_1\times C_2)}}g(C_1,C_2) N^{(1)}_{r(a)} N^{(2)}_{r(b)})\right ) \frac{1}{N^{(1)}_{r(a)}N^{(2)}_{r(b)}}\nonumber  \\
& = &\sum_{(a,b)\in F}~\sum_{\genfrac{}{}{0pt}{}{(C_1,C_2)
\in\C:}{(a,b)\in(C_1\times C_2)}} g(C_1,C_2) \nonumber \\
& = & \sum_{(C_1,C_2)\in\C}g(C_1,C_2)~ \left (\sum_{ (a,b)\in (C_1\times C_2)\cap
F}1 \right )\label{l2} \\
& = & (n_2+1)\left (\sum_{(C_1\times C_2)\cap F\neq \emptyset}g(C_1,C_2) \right
)=n_2+1. \label{l4}
\end{eqnarray}
Here in the first line we applied the Definition \ref{eq:cover}(b). The second sum in equality (\ref{l2}) is $n_2+1$, due to Proposition \ref{lm:2exact}, finally the sum in equality (\ref{l4}) is $1$ because of equality (\ref{eq:lm:2exact}). We are done with Lemma~\ref{lm:4}.
\qed$_{\ref{lm:4}}$

\smallskip
\begin{proof}[Proof of Theorem \ref{th:strict2}]
Let $F$ be a maximum size 2--part Sperner system, and let $b\in \Q.$ Furthermore, let $F(b):= \{ a\in \P : (a,b) \in F\}.$ Then
\begin{equation}\label{eq:old}
\sum_{a\in F(b)} \frac{1}{N^{(1)}_{r(a)}} \le 1
\end{equation}
since, by definition, $F(b)$ is a Sperner system in $\P.$ Then
\begin{displaymath}
\sum_{b'\in \Q_{r(b)}} \sum_{a\in F(b')}  \frac{1}{N^{(1)}_{r(a)}} \frac{1}{N^{(2)}_{r(b)}} \le 1,
\end{displaymath}
finally taking all possible levels from $\Q$ we have
\begin{displaymath}
\sum_{i=0}^{n_2}\sum_{b\in \Q_{i}} \sum_{a\in F(b)} \frac{1}{N^{(1)}_{r(a)}} \frac{1}{N^{(2)}_{r(b)}} \le n_2+1.
\end{displaymath}
This can be rewritten as
\begin{equation}\label{eq:old1}
\sum_{(a,b)\in F} \frac{1}{N^{(1)}_{r(a)}N^{(2)}_{r(b)}} \le n_2+1,
\end{equation}
which is exactly the same, as inequality (\ref{l4}). Since $F$ is optimal, by Lemma \ref{lm:exact}, inequality  (\ref{eq:old1}) holds with equality, and the same holds for inequality (\ref{eq:old}). (Note that for a regular poset (\ref{eq:old1}) follows from (\ref{eq:2-partAZ}).)

Now Theorem \ref{th:strict}   implies that for each $b\in \Q$ the  family  $F(b)$  is a full level in $\P$. If $n_1=n_2$ then we can repeat our reasoning, exchanging the roles of $\P$
and $\Q$ which clearly finishes the proof.  However, if $n_2 < n_1$, then we have to work a little bit more - but the remaining part is about only numbers, therefore the same method what was used in the proof of Theorem 1  in  \cite{AE} finishes the proof of Theorem \ref{th:strict2}.
\end{proof}

\bigskip\noindent
In conclusion we list  some  classes of product posets for which Theorem \ref{th:strict2} holds,
 namely, the products of any two posets presented below.

\smallskip\noindent
The first four classes of posets listed below are regular and level-connected:
\begin{enumerate}[(i)]
\item  The $n$th power of a star $\mathcal{S}_k^n$: the direct product of $n$ copies of a star $\mathcal{S}_k$.  Note that $\mathcal{S}_k^n$ can be represented as the set of all $n$-tuples $a^n\in \{0,1,\ldots, k\}^n$ where the rank of an element $a^n$ is defined as the number of nonzero coordinates in $a^n$.
\item The linear poset (linear lattice) $\L_n(q)$: the poset of all subspaces of a vector space $GF(q)^n$ ordered by inclusion.
\item The affine poset $ \mathcal{A}_n(q)$: the poset of all affine subspaces of a vector space $GF(q)^n$ ordered by inclusion.  The rank of an element $V$ in both posets is the dimension of $V$.
\item Let $\P$ be a ranked poset and let $l,m$ be integers where $ 0\leq l<m\leq r(\P)$.  Then the poset $\P(l,m):=\P_l\cup\ldots\cup \P_m$  is called a {\em truncated poset} of $\P$. This gives another class of regular, unimodal, level-connected posets obtained from  $\mathcal{S}_k^n, \L_n(q), \mathcal{A}_n(q)$.
\item The product of $n$ chains $C(k_1,\ldots,k_n)$ of sizes $k_1\geq \ldots\geq k_n$ with  $k_2+\ldots+k_n\geq k_1$ is an example of a class of strictly normal unimodal posets, which are not regular (see \cite{Gr}).
\end{enumerate}
For other classes of strictly normal posets see \cite{E3} (Chapter 4.6).

\section{Acknowledgement}
The authors express their gratitude to an unknown referee for showing out an essential error in a previous version of this paper.

\end{document}